\documentclass[12pt,reqno,a4paper]{amsart}
\usepackage{amssymb,latexsym}
\usepackage{amsthm, amsmath, amsfonts}
\usepackage{hyperref}
\usepackage{empheq}
\usepackage{chngcntr}
\usepackage{apptools}
\usepackage{graphicx}  
\usepackage{dcolumn}   
\usepackage{bm}        
\usepackage{enumerate}
\usepackage{slashed}
\usepackage{simplewick}
\usepackage{comment}
\usepackage{color}
\usepackage{soul}
\usepackage[english]{babel}
\usepackage{appendix}
\usepackage[utf8]{inputenc}
\usepackage{tikz} 
\usetikzlibrary{decorations.markings}
\usepackage{enumitem}
\numberwithin{equation}{section}

\usepackage{marginnote}
\usepackage[normalem]{ulem}

\newtheorem*{maintheo}{Theorem}

\newtheorem{theorem}{Theorem}[section]
\newtheorem{lemma}[theorem]{Lemma}
\newtheorem{proposition}[theorem]{Proposition}

\theoremstyle{definition}
\newtheorem{definition}[theorem]{Definition}

\newtheorem*{condition}{Condition $\mathfrak{A}$}
\theoremstyle{remark}
\newtheorem{remark}[theorem]{Remark}
\newtheorem{example}[theorem]{Example}

\title[LDP for DPP on Stein manifolds]{Large deviation principle for determinantal point processes on Stein manifolds}
\author{Kiyoon Eum}
\address{Department of Mathematical Sciences, KAIST, 291 Daehak-ro, Yuseong-gu, Daejeon 34141, South Korea}
\email{kyeum@kaist.ac.kr}

\begin{document}

\begin{abstract}
In this paper, we study natural determinantal point processes on Stein manifolds defined by canonical sections and plurisubharmonic weight functions. We present a natural condition on the weights under which these processes satisfy a large deviation principle with a good rate function given by the Fenchel--Legendre transform of the pluripotential equilibrium energy. Along the way, we prove some pluripotential theoretic results on Stein manifolds, such as the $C^{1,1}$ regularity of appropriate envelopes. These results are noncompact analogues of results of Berman and Berman--Boucksom on compact K\"ahler manifolds.
\end{abstract}

\maketitle

\thispagestyle{empty}
\tableofcontents


\section{Introduction}

Determinantal point processes are point processes whose correlation functions are given by determinants of a kernel. Introduced by Macchi \cite{macchi1975coincidence}, they arise in random matrix theory and in the description of fermionic systems; see \cite{soshnikov2000determinantal,shirai2003random,hough2009zeros}. Bergman kernels provide a natural geometric source of such processes. A Hilbert space of square-integrable holomorphic sections determines an orthogonal projection, and its kernel defines a determinantal process whenever the projection is locally trace class. The asymptotic distribution of the points is then closely related to the geometry of the weight defining the Hilbert space.

For powers of a line bundle over a compact complex manifold, Berman established a large deviation principle (LDP) for the associated determinantal processes \cite{berman2014determinantal}. The limiting measure is a pluripotential equilibrium measure, and the rate function is expressed through the dual of the equilibrium energy. The asymptotics of volumes of balls of holomorphic sections and the differentiability of this energy, proved by Berman and Boucksom \cite{berman2010growth}, provide the main complex geometric input for this result. Berman's results also include ensembles on noncompact supports, but those ensembles still arise from finite-dimensional spaces of sections and have finitely many particles for each value of the parameter. Large deviations are also known for particular observables of infinite processes: Shirai proved an LDP for rescaled point counts in expanding disks for the infinite fermion process \cite{shirai2006large}. Our concern here is a measure-valued LDP for infinite rank Bergman processes on Stein manifolds, directly generalizing Berman's result.

In \cite{eum2025limiting}, we studied weighted Bergman determinantal processes on bounded pseudoconvex domains in $\mathbb{C}^n$ and obtained a limit for their scaled cumulant generating functions. The Bergman spaces in that setting are infinite dimensional, and Fredholm determinants replace the finite dimensional determinants used in the compact case. The result applies to compactly supported smooth perturbations $u$ for which the perturbed weight $\phi+u$ remains strictly plurisubharmonic. Extending it to arbitrary compactly supported perturbations requires a plurisubharmonic envelope. As discussed in \cite[Remark 2]{eum2025limiting}, the difficulty is that this envelope need not differ from $\phi$ by a continuous compactly supported function. The finiteness and integration-by-parts arguments for the relative energy therefore require additional control at infinity.

In this paper, we obtain such control from a sufficient geometric growth condition on the reference weight. Let $X$ be a Stein manifold of complex dimension $n\geq1$. A motivating example is furnished by a proper holomorphic embedding $F:X\hookrightarrow\mathbb{C}^N$. The function $\phi_F=1+|F|^2$ is a smooth strictly plurisubharmonic exhaustion, and $i\partial\bar\partial\phi_F$ is the pullback of the Euclidean K\"ahler form. We consider it as an analogue of Fubini--Study potential in Stein setting. The Cauchy--Schwarz inequality shows that $\phi_F$ satisfies the following condition with $B=1$.
\begin{condition}
Let $\phi$ be a smooth strictly plurisubharmonic exhaustion function on $X$. We say that $\phi$ satisfies \textit{Condition $\mathfrak{A}$} if
\begin{equation}\label{conditionA}
    \phi \geq 1 \quad \text{and}\quad i\partial\phi\wedge\bar{\partial}\phi \leq B \phi \,i\partial\bar{\partial}\phi
\end{equation}
for some $B<\infty$.
\end{condition}

Thus every Stein manifold admits a weight satisfying Condition $\mathfrak{A}$. The lower bound $\phi\geq1$ is a convenient normalization and it can be arranged by adding a constant to a nonnegative weight satisfying the inequality condition. Related growth conditions on K\"ahler potentials occur in McNeal's study of $L^2$ harmonic forms \cite{mcneal20022}. In our setting, Condition $\mathfrak{A}$ implies the completeness of the K\"ahler metric $\omega_\phi=i\partial\bar\partial\phi$ and provides the control needed to localize the envelope problem.

For a real-valued $u\in C_c^\infty(X)$, let
\begin{equation*}
    P(\phi+u):=\left(\sup\{ v \in PSH(X):v \leq \phi+u\}\right)^*,
\end{equation*}
where the star denotes upper semicontinuous regularization. This is the envelope over all plurisubharmonic functions on $X$. In the weighted polynomial setting on $\mathbb{C}^n$ of \cite{berman2009poly}, the spaces of polynomials of bounded degree and an envelope in the Lelong class were used. Here we use the full weighted Bergman space, and Condition $\mathfrak{A}$ allows us to work with the ordinary envelope above. More precisely, we prove that
\begin{equation*}
    P(\phi+u)-\phi\in C_c^{1,1}(X)
\end{equation*}
for every $u\in C_c^\infty(X)$ (Theorem \ref{plurimain}). In particular, the envelope equals $\phi$ outside a compact set. The regularity argument adapts the method of Bedford--Taylor \cite{MR445006}, using holomorphic vector fields supplied by Cartan's Theorem A.

We now introduce the objects in the main theorem. Let $K_X$ denote the canonical bundle of $X$ and set $c_n=2^{-n}i^{n^2}$. For $k>0$, consider
\begin{equation*}
    \mathcal H(k\phi)=
    \left\{s\in H^0(X,K_X):
    \|s\|_{k\phi}^2:=\int_X c_n\,s\wedge\overline{s}\,e^{-k\phi}<\infty\right\}.
\end{equation*}
The use of holomorphic $n$-forms makes this norm independent of an auxiliary volume form. The corresponding Bergman projection defines a canonical determinantal process $\Theta_k$, as explained in Definition \ref{definitionDPP}. Under Condition $\mathfrak{A}$, $\mathcal H(k\phi)$ is infinite dimensional for every $k>0$ (Remark \ref{infinitedimension}). We consider the rescaled counting measures as elements of the space of positive Radon measures with the vague topology, that is, convergence against functions in $C_c(X)$. 

We use the convention $dd^c=\frac{i}{2\pi}\partial\bar\partial$ and write $\operatorname{MA}(a)=(dd^c a)^n/n!$. For a plurisubharmonic function $a$ with $a-\phi\in C_c^{1,1}(X)$, define the relative Monge--Amp\`ere energy by
\begin{equation*}
    \mathcal E_\phi(a)=
    \frac{1}{(n+1)!}\sum_{j=0}^n
    \int_X (a-\phi)(dd^c a)^j\wedge(dd^c\phi)^{n-j}.
\end{equation*}
These integrals are finite because $a-\phi$ is compactly supported, and $\mathcal E_\phi(\phi)=0$. The envelope result above makes the equilibrium energy $\mathcal E_\phi(P(\phi+u))$ well-defined for every smooth compactly supported $u$. Moreover, we will show that it is differentiable (Theorem \ref{diffofequil}), following \cite{berman2010growth}. All test functions below are real-valued.

\begin{maintheo}[=Theorem \ref{thmmain}]
Let $X$ be a Stein manifold with a plurisubharmonic exhaustion function $\phi$ satisfying the Condition $\mathfrak{A}$. Let $\Theta_k$ be the determinantal point process associated with the weighted Bergman space of holomorphic $n$-forms with weight $e^{-k\phi}$, defined in Definition \ref{definitionDPP}. Set 
\begin{equation*}
    L_k=\frac{1}{k^n}\Theta_k.
\end{equation*}
The laws of $L_k$, on the space of positive Radon measures with the vague topology, satisfy a large deviation principle with speed $r_k=k^{n+1}$ and a good rate function
\begin{equation*}
    I_\phi(\mu)=\sup_{u\in C_c^\infty(X)}\biggl\{ \mathcal{E}_\phi(P(\phi+u))-\int_X ud\mu \biggr\}.
\end{equation*}
The unique zero of $I_\phi$ is
\begin{equation*}
    \mu_\phi=\operatorname{MA}(\phi)=\frac{(dd^c \phi)^n}{n!}.
\end{equation*}
\end{maintheo}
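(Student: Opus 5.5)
The plan is to prove the LDP through the Gärtner--Ellis theorem in its projective-limit form on the space of positive Radon measures with the vague topology. This space is the projective limit of the finite-dimensional images $\mu\mapsto(\int u_1d\mu,\dots,\int u_md\mu)$ with $u_j\in C_c^\infty(X)$.

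For a real $u\in C_c^\infty(X)$, the determinantal structure gives the scaled cumulant generating function as a Fredholm determinant:
\[
\Lambda_k(u):=\frac{1}{k^{n+1}}\log\mathbb{E}\bigl[e^{-k\sum_{x\in\Theta_k}u(x)}\bigr]=\frac{1}{k^{n+1}}\log\det\bigl(I+(e^{-ku}-1)\Pi_k\bigr),
\]
where $\Pi_k$ is the Bergman projection onto $\mathcal H(k\phi)$. Because $u$ has compact support and $\Pi_k$ is locally trace class, this determinant is a well-defined Fredholm determinant. It equals the ratio of Gram determinants of the restriction of the two norms $\|\cdot\|_{k(\phi+u)}$ and $\|\cdot\|_{k\phi}$. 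The sign convention is chosen so that the limit should be $\mathcal E_\phi(P(\phi+u))$, matching the form of $I_\phi$ with $\int u\,d\mu$ entering with a minus sign.

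\textbf{Key step 1: the limit $\Lambda_k(u)\to\mathcal E_\phi(P(\phi+u))$.} I would prove the two inequalities separately.

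For the lower bound, take a smooth strictly plurisubharmonic $\psi$ with $\psi-\phi\in C_c^\infty$ and $\psi\le\phi+u$ (up to $\epsilon$). Monotonicity of the Fredholm determinant in the weight gives $\Lambda_k(u)\ge\Lambda_k(\psi-\phi)$. The strictly psh case is the analogue of \cite{eum2025limiting}: along the path $\phi_t=\phi+t(\psi-\phi)$, differentiate $\log\det$, which gives $-k\int(\psi-\phi)\,B_{k\phi_t}$ with $B$ the Bergman density. Local Bergman kernel asymptotics give $k^{-n}B_{k\phi_t}\to\operatorname{MA}(\phi_t)$ uniformly on the compact support. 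Integrating in $t$ yields $\mathcal E_\phi(\psi)$. Then let $\psi\uparrow P(\phi+u)$, using Theorem \ref{plurimain} and continuity of $\mathcal E_\phi$ under such approximation (the energies are integrals over a fixed compact set).

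For the upper bound, I would compare $\|s\|_{k(\phi+u)}$ with $\|s\|_{kP(\phi+u)}$. Since $P(\phi+u)=\phi+u$ on the contact set and $P(\phi+u)-\phi$ is $C^{1,1}$ with compact support, the Bergman density for the weight $kP(\phi+u)$ (regularized) is bounded by $k^n\operatorname{MA}(P(\phi+u))(1+o(1))$, and only the contact set contributes. I would then use the differentiability Theorem \ref{diffofequil}, via $\frac{d}{dt}\mathcal E_\phi(P(\phi+tu))=\int u\,\operatorname{MA}(P(\phi+tu))$. Combined with the convexity of $t\mapsto\Lambda_k(tu)$, this gives a matching upper bound on $\limsup\Lambda_k(u)$ through derivative estimates along $t\mapsto tu$, in the style of Berman--Boucksom.

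\textbf{Key step 2: applying Gärtner--Ellis.} The limit functional $u\mapsto\mathcal E_\phi(P(\phi+u))$ is finite everywhere on $C_c^\infty$ and Gâteaux differentiable by Theorem \ref{diffofequil}. Hence every finite-dimensional restriction is differentiable, so the Gärtner--Ellis theorem gives an LDP for each projection. Dawson--Gärtner then lifts this to the projective limit, with rate function given by the Legendre transform $I_\phi$. Since $C_c^\infty$ is dense in $C_c$ and the integrals are monotone, the projective limit topology agrees with the vague topology on positive measures.

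Goodness of $I_\phi$ needs exponential tightness. I would get it from the Chebyshev bound $\mathbb P(L_k(K)>M)\le e^{-k^{n+1}(M-C_K)}$, obtained by taking $u$ to be a multiple of a cutoff function and using $\Lambda(u)<\infty$. The sets $\{\mu:\mu(K_j)\le M_j\}$ are vaguely compact.

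\textbf{Zero of $I_\phi$.} Taking $u=0$ gives $I_\phi\ge0$. Gateaux differentiability at $u=0$ with derivative $\int u\,\operatorname{MA}(\phi)$ forces any zero $\mu$ to satisfy $\int u\,d\mu=\int u\,\operatorname{MA}(\phi)$ for all $u$. Concavity of $u\mapsto\mathcal E_\phi(P(\phi+u))$ shows $I_\phi(\operatorname{MA}\phi)=0$.

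\textbf{Main obstacle.} I expect the main obstacle to be the upper bound in Step 1 on the noncompact manifold: controlling Bergman densities uniformly near the region where $P(\phi+u)=\phi$ and at infinity. Condition $\mathfrak A$ (completeness and $L^2$ estimates) together with $P(\phi+u)-\phi\in C_c^{1,1}$ should localize the problem to a compact set.
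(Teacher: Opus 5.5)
Your overall architecture matches the paper: the Fredholm-determinant formula, the limit of the scaled cumulant generating function, G\"artner--Ellis, exponential tightness, and identification of the zero. However, the crucial half of Key Step 1 is not proved.

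\textbf{Signs.} With your definition, $\Lambda_k(u)\to-\mathcal{E}_\phi(P(\phi+u))$, not $+\mathcal{E}_\phi(P(\phi+u))$. For $u\ge0$ the expectation is at most $1$, while $P(\phi+u)\ge\phi$ forces $\mathcal{E}_\phi(P(\phi+u))\ge0$. Also, $\psi-\phi\le u$ gives $\Lambda_k(u)\le\Lambda_k(\psi-\phi)$, not $\ge$. With both signs corrected, your approximation by strictly psh $\psi\le\phi+u$ proves only the easy inequality $\limsup_k\Lambda_k(u)\le-\mathcal{E}_\phi(P(\phi+u))$. The paper gets this without regularizing, using Bergman asymptotics for the $C^{1,1}$ weights $\phi+tv$, which satisfy $i\partial\bar\partial(\phi+tv)\ge(1-t)\omega_\phi$.

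\textbf{The gap.} The hard inequality $\liminf_k\Lambda_k(u)\ge-\mathcal{E}_\phi(P(\phi+u))$ is the one you call the upper bound, and your proposed mechanism does not give it. Set $g_k(t)=\Lambda_k(tu)$ and $G(t)=-\mathcal{E}_\phi(P(\phi+tu))$. Convexity of $g_k$, convergence for small $|s|$ (where $\phi+su$ is psh), and differentiability of $G$ only give the tangent-line bound $\liminf_k g_k(1)\ge G(s)+G'(s)(1-s)$. This is strictly below $G(1)$ in general. Integrating $g_k'(t)=-k^{-n}\int_X u\,\beta_{k(\phi+tu)}$ over $[0,1]$ would instead require $k^{-n}\beta_{k(\phi+tu)}\to\operatorname{MA}(P(\phi+tu))$ for the \emph{non-psh} weights $\phi+tu$. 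You do not establish this. The compact-case arguments for it (finite total mass $O(k^n)$, a global Bernstein--Markov inequality) are not available for an infinite-rank space on noncompact $X$. Your bound on the Bergman density of the psh weight $kP(\phi+u)$ is the right analytic input, but nothing in the proposal converts it into a bound on a Fredholm determinant.

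\textbf{The missing idea} is the paper's splitting plus a determinant-to-trace inequality.
\begin{enumerate}[label=\textup{(\roman*)}]
\item Put $p=P(\phi+u)=\phi+v$ and $r=\phi+u-p\ge0$; both $v$ and $r$ are compactly supported by the envelope theorem.
\item Write $\mathcal{D}_{k,a}(w)=\det T_{ka,e^{-kw}}$ on $\mathcal{H}(ka)$. Multiplicativity of Fredholm determinants gives $\mathcal{D}_{k,\phi}(u)=\mathcal{D}_{k,\phi}(v)\,\mathcal{D}_{k,p}(r)$.
\item The first factor is treated along the psh path $\phi+tv$ and gives exactly $-\mathcal{E}_\phi(p)$ in the limit.
\item For the second factor, operator convexity of $-\log$ (Davis's inequality) gives $0\ge\log\mathcal{D}_{k,p}(r)\ge-k\operatorname{Tr}T_{kp,r}=-k\int_X r\,\beta_{kp}$.
\item The local Morse upper bound, reverse Fatou, and the orthogonality relation $\int_X r\operatorname{MA}(p)=0$ then make this $o(k^{n+1})$.
\end{enumerate}
An elementary alternative to Davis's inequality also works: use the chord bound $\log\lambda\ge-\frac{k\|r\|_\infty}{1-e^{-k\|r\|_\infty}}(1-\lambda)$ on the spectrum, which lies in $[e^{-k\|r\|_\infty},1]$, together with $\operatorname{MA}(p)(\{r>0\})=0$.

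With this step in place, the rest of your plan works. Your Dawson--G\"artner projective-limit route is a legitimate substitute for the paper's abstract G\"artner--Ellis theorem on $(M,\tau)$. You still need to show, as the paper does, that the rate function is $+\infty$ off positive functionals before restricting to $M_+$.
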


\begin{example}
Let $X=\mathbb{C}^n$ and $\phi_0(z)=1+|z|^2$. Then $\phi_0$ satisfies Condition $\mathfrak{A}$ with $B=1$. Identifying a holomorphic $n$-form with its coefficient relative to $dz^1\wedge\cdots\wedge dz^n$, the space $\mathcal{H}(k\phi_0)$ is a weighted Fock space. After absorbing the Gaussian weight into the kernel, the associated determinantal process $\Theta_k$ has correlation kernel
\begin{equation*}
    \widetilde{K}_k(z,w)=\left(\frac{k}{\pi}\right)^n \exp\left(k z\cdot\overline{w} -\frac{k}{2}|z|^2 -\frac{k}{2}|w|^2 \right)
\end{equation*}
with respect to Lebesgue measure $d\lambda_{2n}$, where $z\cdot\overline{w}=\sum_{j=1}^n z^j\overline{w^j}$. In particular,
\begin{equation*}
    \frac{1}{k^n}\beta_{k\phi_0}=\frac{1}{\pi^n}d\lambda_{2n}=\operatorname{MA}(\phi_0)
\end{equation*}
for every $k>0$. The main theorem therefore gives an LDP for $L_k=k^{-n}\Theta_k$ in the vague topology, with speed $k^{n+1}$ and good rate function $I_{\phi_0}$, whose unique zero is $\pi^{-n}d\lambda_{2n}$.

When $n=1$, $\Theta_1$ is the standard infinite fermion process with exponential kernel \cite{shirai2006large}. If $\mathcal{G}$ denotes its random configuration, then
\begin{equation*}
    \Theta_k \overset{\mathrm{law}}{=} \sum_{\zeta\in\mathcal{G}} \delta_{\zeta/\sqrt{k}}.
\end{equation*}
Equivalently, writing $k=R^2$, the random measures
\begin{equation*}
    \frac{1}{R^2}\sum_{\zeta\in\mathcal{G}} \delta_{\zeta/R}
\end{equation*}
satisfy an LDP as $R\rightarrow\infty$, with speed $R^4$ and unique zero $\pi^{-1}dA$, where $dA$ is planar Lebesgue measure.
\end{example}

Section 2 collects the complex geometric and probabilistic preliminaries. In Section 3, we construct the canonical determinantal point process, establish the envelope and equilibrium energy results, and prove the Bergman kernel asymptotics. We then derive the limit of the cumulant generating functions and complete the proof of the large deviation principle using the abstract G\"artner--Ellis theorem.

\section{Preliminaries}

\subsection{Preliminaries on complex geometry}\label{prelimcplx}

In this subsection we introduce the setting of our DPP, namely Stein manifolds, weighted Bergman spaces of holomorphic $n$-forms, and their associated Bergman kernels. For a reference on Stein manifolds, see \cite[Chapter 2]{forstnerivc2011stein}. We also recall Demailly's singular $L^2$-estimate on a complete K\"ahler manifold which we will use in the proof of the main result. 

\begin{definition}
A complex manifold $X$ is called a \textit{Stein manifold} if

(1) For any compact subset $K\subset X$, $\widehat{K}:=\{x\in X : |f(x)|\leq \sup_K |f|, \;\forall f\in \mathcal{O}(X)\}$ is compact.

(2) For any $x\neq y\in X$,  there exists $f\in \mathcal{O}(X)$ such that $f(x)\neq f(y)$. 
\end{definition}
Equivalently, Stein manifolds can be characterized by the existence of strictly plurisubharmonic exhaustion functions. Two theorems about Stein manifolds are important for us. The first one is Cartan's Theorem A, which implies that any holomorphic vector bundle on Stein manifolds, in particular holomorphic vector bundle, is globally generated. The other one is the embedding theorem due to Remmert, stating that any Stein manifold admits a proper holomorphic embedding into $\mathbb{C}^N$ for some $N\in\mathbb{N}$. From this we have the following existence result for psh exhaustion functions satisfying Condition $\mathfrak{A}$.

\begin{proposition}
Let $X$ be a Stein manifold. Then there is a function $\phi$ satisfying Condition $\mathfrak{A}$. For example, let $F=(F_1,\cdots,F_N):X\hookrightarrow \mathbb{C}^N$ be a proper holomorphic embedding. Then
\begin{equation*}
    \phi_F=1+\sum_{j=1}^N |F_j|^2
\end{equation*}
satisfies the Condition $\mathfrak{A}$ with $B=1$.
\end{proposition}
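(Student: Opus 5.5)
The plan is to verify the three requirements of Condition $\mathfrak{A}$ for $\phi_F$ directly: it is a smooth strictly plurisubharmonic exhaustion, it satisfies $\phi_F\geq 1$, and it satisfies the gradient inequality with $B=1$. Once this is done, the existence statement follows from Remmert's embedding theorem, which provides a proper holomorphic embedding $F:X\hookrightarrow\mathbb{C}^N$.

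\emph{Basic properties.} Smoothness is clear, and $\phi_F\geq1$ holds trivially. Since each $F_j$ is holomorphic,
\begin{equation*}
i\partial\bar\partial\phi_F=\sum_{j=1}^N i\,\partial F_j\wedge\overline{\partial F_j}=F^*\Big(\sum_j i\,dz_j\wedge d\bar z_j\Big),
\end{equation*}
which is the pullback of the Euclidean K\"ahler form. Because $F$ is an immersion, $dF$ is injective on each tangent space. Hence this form is positive definite, and $\phi_F$ is strictly plurisubharmonic. For the exhaustion property, note that $F$ is proper, so the sublevel set $\{\phi_F\leq c\}=F^{-1}(\{|z|^2\leq c-1\})$ is compact for every $c$.

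\emph{The gradient inequality.} This is the only step needing a small computation. We have $\partial\phi_F=\sum_j \bar F_j\,\partial F_j$. For a tangent vector $v\in T^{1,0}_xX$, the left side evaluates to
\begin{equation*}
\langle i\partial\phi_F\wedge\bar\partial\phi_F,\ v\wedge\bar v\rangle=\Big|\sum_j \bar F_j(x)\,\partial F_j(v)\Big|^2 .
\end{equation*}
Applying the Cauchy--Schwarz inequality in $\mathbb{C}^N$ gives
\begin{equation*}
\Big|\sum_j \bar F_j(x)\,\partial F_j(v)\Big|^2\leq \Big(\sum_j|F_j(x)|^2\Big)\Big(\sum_j|\partial F_j(v)|^2\Big).
\end{equation*}
The second factor equals $\langle i\partial\bar\partial\phi_F, v\wedge\bar v\rangle$. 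For the first factor, $\sum_j|F_j|^2=\phi_F-1\leq\phi_F$. Positivity of a real $(1,1)$-form is tested exactly on the elements $v\wedge\bar v$, so these bounds yield the form inequality
\begin{equation*}
i\partial\phi_F\wedge\bar\partial\phi_F\leq \phi_F\, i\partial\bar\partial\phi_F,
\end{equation*}
which is the required estimate with $B=1$.

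No step is a real obstacle. The only point needing care is to interpret the inequality of $(1,1)$-forms as an inequality of the associated Hermitian forms on $T^{1,0}X$, which is exactly what allows the Cauchy--Schwarz step to go through.
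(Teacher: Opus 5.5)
Your proof is correct and follows the paper's argument. In both, properness of $\phi_F$ comes from properness of $F$, strict plurisubharmonicity comes from $i\partial\bar\partial\phi_F$ being the pullback of the Euclidean K\"ahler form under an immersion, and the $B=1$ inequality comes from Cauchy--Schwarz applied to $\partial\phi_F(\xi)=\sum_j\overline{F_j}\,dF_j(\xi)$ together with $\sum_j|F_j|^2\leq\phi_F$.
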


\begin{proof}
Properness of $\phi_F$ follows from properness of $F$. Since $F$ is an immersion, 
\begin{equation*}
    i\partial\bar{\partial} \phi_F = i \sum_j \partial F_j \wedge \bar{\partial}\overline{F_j}=i \sum_j d F_j \wedge d \overline{F_j}=F^*(i\sum_j dz_j\wedge d\bar{z}_j)>0.
\end{equation*}
Hence $\phi_F$ is a strictly psh exhaustion function. Equation \eqref{conditionA} with $B=1$ follows from the Cauchy--Schwarz inequality: for $\xi\in T^{1,0}_x X$,
\begin{equation*}
    |\partial \phi_F(\xi)|^2=\left\lvert\sum_j \overline{F_j(x)}dF_j(\xi)\right\rvert^2 \leq \sum_j |F_j(x)|^2 \,\sum_j |dF_j(\xi)|^2\leq \phi_F(x)\partial\bar{\partial}\phi_F(\xi,\bar\xi).
\end{equation*}
Alternatively, let $\psi$ be any strictly psh exhaustion function and let $\phi:=e^{\psi-\inf \psi}$. It is easy to check that $\phi$ satisfies Condition $\mathfrak{A}$. Indeed, if $X=\mathbb{C}^n$, $\phi_{\mathrm{id}}=1+|z|^2=e^{\log(1+|z|^2)}=e^{\phi_{FS}}$.
\end{proof}

The following Proposition provides a complete K\"ahler metric which will be exploited in an $L^2$ estimate later, from psh weight $\phi$ with condition $\mathfrak{A}$.

\begin{proposition}\label{complete}
Let $\phi$ satisfy the condition $\mathfrak{A}$. Then $\omega_\phi:=i\partial\bar\partial \phi$ is a complete K\"ahler metric.
\end{proposition}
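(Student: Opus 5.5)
The plan is to exhibit an exhaustion function with bounded gradient in the metric $\omega_\phi$, and then appeal to the standard completeness criterion (Hopf--Rinow type). If $\rho$ is a smooth exhaustion with $|d\rho|_{\omega_\phi}\leq C$, then $\rho$ is $C$-Lipschitz for the Riemannian distance $d_\phi$ of $\omega_\phi$. Hence every $d_\phi$-ball $B(x_0,R)$ lies in the sublevel set $\{\rho\leq \rho(x_0)+CR\}$, which is compact because $\rho$ is an exhaustion. Closed bounded sets are therefore compact, and $\omega_\phi$ is complete. That $\omega_\phi$ is Kähler is immediate, since $\phi$ is smooth and strictly psh, so $i\partial\bar\partial\phi$ is a closed positive $(1,1)$-form.

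The natural candidate is $\rho=\sqrt{\phi}$, or equivalently $\rho=2\sqrt{\phi}$. It is smooth because $\phi\geq1$, and it is an exhaustion because $\phi$ is. We compute
\begin{equation*}
\partial\rho=\frac{\partial\phi}{2\sqrt{\phi}},\qquad i\partial\rho\wedge\bar\partial\rho=\frac{i\partial\phi\wedge\bar\partial\phi}{4\phi}\leq \frac{B}{4}\, i\partial\bar\partial\phi=\frac B4\,\omega_\phi,
\end{equation*}
using Condition $\mathfrak{A}$. This inequality of $(1,1)$-forms says exactly that $|\partial\rho|^2_{\omega_\phi}\leq B/4$. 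Since $\rho$ is real, $d\rho=\partial\rho+\bar\partial\rho$, so the real gradient norm satisfies $|d\rho|_{g_\phi}^2=c\,|\partial\rho|^2_{\omega_\phi}$ for a normalization constant $c$ (2 or 4, depending on conventions). Hence $|d\rho|_{g_\phi}\leq C(B)$.

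It remains to carry out the Lipschitz estimate. For a piecewise $C^1$ curve $\gamma$ from $x$ to $y$,
\begin{equation*}
|\rho(y)-\rho(x)|\leq\int|d\rho(\dot\gamma)|\leq C\,\mathrm{length}(\gamma).
\end{equation*}
Taking the infimum over such curves gives $|\rho(y)-\rho(x)|\leq C\,d_\phi(x,y)$. This yields the containment of metric balls in compact sublevel sets described above, and Hopf--Rinow then gives completeness.

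There is no serious obstacle here. The only point requiring care is translating the form inequality $i\partial\rho\wedge\bar\partial\rho\leq C\omega_\phi$ into a pointwise bound on the Riemannian gradient, with consistent constants. This amounts to evaluating both sides on $(\xi,\bar\xi)$ for $\xi\in T^{1,0}X$ and relating $d\rho(v)$ to $\partial\rho(\xi)$ for $v=\xi+\bar\xi$.
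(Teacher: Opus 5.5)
Your proposal is correct and follows essentially the same argument as the paper. Condition $\mathfrak{A}$ gives a uniform $\omega_\phi$-gradient bound for the exhaustion $\sqrt{\phi}$, so $\sqrt{\phi}$ is Lipschitz for the $\omega_\phi$-distance and closed metric balls lie in compact sublevel sets; you spell out the translation from the form inequality to the gradient bound and the Hopf--Rinow step, which the paper leaves implicit.
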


\begin{proof}
The condition \eqref{conditionA} implies $|\nabla_{\omega_\phi} \phi|^2_{\omega_\phi}\leq C_B\phi$ for some constant $C_B$ only depending on $B$. In particular, this implies $|\nabla_{\omega_\phi}\sqrt{\phi}|_{\omega_\phi}^2\leq C_B/4$ ($\sqrt{\phi}$ is smooth since $\phi\geq1$). Since $\sqrt{\phi}$ is proper, every closed $\omega_\phi$-ball is compact.
\end{proof}

Let $K_X=\bigwedge^n (T^{1,0}X)^*$ be the canonical bundle on $X$. Set $c_n:=2^{-n}i^{n^2}$. For a weight function $\phi$ (not necessarily psh), we consider the weighted Bergman space
\begin{equation}
    \mathcal{H}(k\phi):=\Bigl\{ s\in H^0(X,K_X) : \|s\|^2_{k\phi}:=\int_X c_n\,s\wedge \overline{s}e^{-k\phi}<\infty \Bigr\}.
\end{equation}
If $a$ is any other continuous weight such that $\phi-a$ is compactly supported, the two $L^2$ norms are equivalent and the underlying vector spaces $\mathcal{H}(ka), \mathcal{H}(k\phi)$ are the same. We will see later that if $\phi$ satisfies Condition $\mathfrak{A}$, they are infinite dimensional (Remark \ref{infinitedimension}). Note that so far we do not impose a hermitian metric on $K_X$. Now choose an arbitrary smooth positive volume form $dV$ on $X$. This induces a hermitian metric $|\cdot|_{dV}$ on $K_X$ by
\begin{equation*}
    |s|_{dV}^2dV=c_n\,s\wedge \overline{s}.
\end{equation*}
Under this hermitian metric, $\mathcal{H}(k\phi)$ can be considered as a subspace of $L^2(X,K_X,|\cdot|_{dV}e^{-k\phi}dV)$. By the submean value inequality, $\mathcal{H}(k\phi)$ is a reproducing kernel Hilbert space. The reproducing kernel, called the \textit{weighted Bergman kernel}, of this space is given by
\begin{equation*}
    K_{k\phi}(x,y)=\sum_j s_j(x)\otimes \overline{s_j}(y) \in K_{X,x}\boxtimes \overline{K_{X,y}},
\end{equation*}
where the $s_j$ form an orthonormal basis of $\mathcal{H}(k\phi)$. Also, define the \textit{Bergman measure} by
\begin{equation}\label{Bergmanmeasure}
    \beta_{k\phi}=\sum_j c_n\, s_j\wedge \overline{s_j} e^{-k\phi}.
\end{equation}
Choose local coordinates $z=(z^1,\cdots,z^n)$. Denote $dz^1\wedge\cdots \wedge dz^n$ by $dZ$. Then locally, $s_j=f_jdZ$ for some holomorphic functions $f_j$ and
\begin{equation*}
    K_{k\phi}(z,z)=\sum_j |f_j(z)|^2 dZ\otimes \overline{dZ}, \quad \beta_{k\phi}=c_n\sum_j |f_j(z)|^2e^{-k\phi(z)} dZ \wedge d\bar{Z}.
\end{equation*}
It is well known that the diagonal of the Bergman kernel has the following extremal characterization:
\begin{equation}\label{Bk}
    B_{k\phi}(z):=\frac{d\beta_{k\phi}}{c_n dZ\wedge d\bar{Z}}=\sum_j |f_j(z)|^2e^{-k\phi(z)}=\sup_{0\neq s\in\mathcal{H}(k\phi)}\frac{|f_s(z)|^2e^{-k\phi(z)}}{\|s\|^2_{k\phi}},
\end{equation}
where $s=f_s \,dZ$. Note that $B_{k\phi}$ here only makes sense once we choose a local chart.

To readily apply existing results on determinantal point processes in Section \ref{section3.1}, which are usually formulated in terms of functions, we need to translate these constructions to the space of functions. Choose a global Borel measurable locally bounded unitary frame $e$ for $K_X$ with the metric given by $dV$. The section $e$ does not need to be continuous or holomorphic. If locally $dV=c_n v(z) dZ\wedge d\bar{Z}$, $e$ is locally expressed as $e=\sqrt{v}dZ$. Define the unitary map
\begin{align*}
    U_e : L^2(X,K_X,|\cdot|_{dV}e^{-k\phi}dV)&\longrightarrow L^2(X,e^{-k\phi}dV)\\
    s= ae \quad &\longmapsto \quad a.
\end{align*}
Denoting by $P_{k\phi} : L^2(X,K_X,|\cdot|_{dV}e^{-k\phi}dV)\longrightarrow \mathcal{H}(k\phi)$ the orthogonal projection onto the subspace of holomorphic sections, we obtain that $U_e \circ P_{k\phi} \circ U_e^{-1}:L^2(X,e^{-k\phi}dV) \longrightarrow U_e(\mathcal{H}(k\phi))$ is an orthogonal projection with a well-defined scalar integral kernel
\begin{equation*}
    K^{dV,e}_{k\phi}(x,y)=\sum_j a_j(x) \overline{a_j}(y)
\end{equation*}
where $a_j=U_e(s_j)$. In general, $K^{dV}_{k\phi}$ is merely locally bounded. If we choose another unitary frame $e'=ge$ with $|g|=1$, $g\times U_{e'}=U_e$ and
\begin{equation}\label{choice1}
    g(x)\overline{g}(y)K^{dV,e'}_{k\phi}(x,y)=K^{dV,e}_{k\phi}(x,y) \implies \det \left[K^{dV,e'}_{k\phi}(x_i,x_j)\right]_{i,j}=\det \left[K^{dV,e}_{k\phi}(x_i,x_j)\right]_{i,j}.
\end{equation}
Also, if we choose a different volume form $dV'=hdV$, $e'=\sqrt{h}e$ is a unitary frame for this new hermitian metric, and we have
\begin{equation}\label{choice2}
    \det \left[K^{dV',e'}_{k\phi}(x_i,x_j)\right]_{i,j}\prod_{j=1}^m dV'(x_j)=\det \left[K^{dV,e}_{k\phi}(x_i,x_j)\right]_{i,j}\prod_{j=1}^m dV(x_j).
\end{equation}
These considerations will be used to define a canonical determinantal point process on $X$ that only depends on the choice of $\phi$, not $dV$ or $e$.

Finally, we will need the following special case of an $L^2$ estimate due to Demailly.
\begin{theorem}[\protect{\cite[Theorem 5.1, Remark 4.5]{Demailly1982}}]\label{demailly}
Let $(X,\omega)$ be a complete K\"ahler manifold. Let $\varphi$ be an upper semicontinuous function on $X$ which is locally the sum of a smooth function and a plurisubharmonic function. Suppose that $i\partial\bar\partial \varphi \geq \delta \omega$ for some constant $\delta>0$. Then, for every continuous $(n,1)$-form $g$ such that
\begin{equation*}
    \bar\partial g=0,\qquad \int_X |g|_\omega^2\,e^{-\varphi}\frac{\omega^n}{n!}<\infty,
\end{equation*}
there exists a $(n,0)$-form $f$ such that 
\begin{equation*}
    \bar\partial f=g,\qquad \int_X c_n\,f\wedge \overline{f}\;e^{-\varphi} \leq \frac{C}{\delta}\int_X |g|_\omega^2\,e^{-\varphi}\frac{\omega^n}{n!},
\end{equation*}
for some constant $C$ only depending on $n$. 
\end{theorem}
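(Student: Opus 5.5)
The proof follows the Hörmander--Andreotti--Vesentini--Demailly scheme. First I would prove the estimate for a smooth weight, using the Bochner--Kodaira--Nakano identity together with completeness of $\omega$. Then I would pass to the singular weight $\varphi$ by a regularization and a weak-limit argument. Throughout I regard $(n,q)$-forms as $(0,q)$-forms with values in $K_X$, carrying the trivial metric twisted by $e^{-\varphi}$. The curvature of this twisted bundle is $i\partial\bar\partial\varphi$.

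\emph{Step 1: smooth weight.} Assume first that $\varphi$ is smooth with $i\partial\bar\partial\varphi\geq\delta\omega$. For a compactly supported smooth $(n,1)$-form $u$, the Bochner--Kodaira--Nakano inequality gives
\begin{equation*}
\|\bar\partial u\|_\varphi^2+\|\bar\partial^*_\varphi u\|_\varphi^2\geq \int_X\langle [i\partial\bar\partial\varphi,\Lambda_\omega]u,u\rangle e^{-\varphi}\frac{\omega^n}{n!}\geq \delta\|u\|_\varphi^2 .
\end{equation*}
The second inequality holds because, in bidegree $(n,1)$, the operator $[i\partial\bar\partial\varphi,\Lambda_\omega]$ acts as the sum of eigenvalues of $i\partial\bar\partial\varphi$ on the single $(0,1)$ slot. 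Since $\omega$ is complete, a cutoff argument (Andreotti--Vesentini) shows that compactly supported smooth forms are dense in $\operatorname{Dom}\bar\partial\cap\operatorname{Dom}\bar\partial^*_\varphi$ for the graph norm. Hence the inequality extends to all $u$ in this domain.

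Now take $g$ with $\bar\partial g=0$. Split $u=u_1+u_2$ with $u_1\in\ker\bar\partial$ and $u_2\perp\ker\bar\partial$, and note that $u_2\in\ker\bar\partial^*_\varphi$. Then
\begin{equation*}
|\langle g,u\rangle_\varphi|=|\langle g,u_1\rangle_\varphi|\leq \delta^{-1/2}\|g\|_\varphi\,\|\bar\partial^*_\varphi u\|_\varphi .
\end{equation*}
By Hahn--Banach and Riesz representation there is an $f$ with $\bar\partial f=g$ in the sense of distributions and $\|f\|_\varphi^2\leq\delta^{-1}\|g\|_\varphi^2$. The constant $C$ only absorbs the normalization $c_n$ and the pointwise norm conventions, so it depends only on $n$.

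\emph{Step 2: singular weight.} In general $\varphi$ is only locally (smooth $+$ psh). I would construct smooth functions $\varphi_j$ decreasing pointwise to $\varphi$ such that
\begin{equation*}
i\partial\bar\partial\varphi_j\geq(\delta-\varepsilon_j)\omega \quad\text{on } X_j,\qquad \varepsilon_j\to0,
\end{equation*}
where $X_j$ is an exhaustion of $X$ by relatively compact open sets. Locally this comes from convolution in charts. The local pieces are glued by Richberg/Demailly regularized-maximum patching, and the loss of positivity on each compact set is controlled by the continuity of $\omega$ and of the smooth part.

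The weights $\varphi_j$ cannot be used directly on all of $X$, since the positivity only holds on $X_j$. To fix this, I would either modify $\varphi_j$ outside a compact set by a regularized maximum with $\varphi+$ (large constant) where $\varphi$ is already smooth, or, if needed, solve on $X_j$ with a complete metric $\omega+\eta_j\,i\partial\bar\partial(-\log(c_j-\rho))$ built from an auxiliary exhaustion $\rho$.

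Since $\varphi_j\geq\varphi$, the right-hand sides satisfy $\int|g|^2e^{-\varphi_j}\leq\int|g|^2e^{-\varphi}<\infty$ uniformly in $j$. Step 1 then gives solutions $f_j$ with
\begin{equation*}
\int c_nf_j\wedge\overline{f_j}\,e^{-\varphi_j}\leq\frac{C}{\delta-\varepsilon_j}\int|g|^2e^{-\varphi}.
\end{equation*}
These are uniformly bounded in $L^2_{loc}$, because $\varphi_j$ is locally bounded above uniformly in $j$. Extract a weak limit $f$. It satisfies $\bar\partial f=g$, and monotone convergence together with weak lower semicontinuity, applied on each fixed $X_m$ against $e^{-\varphi_m}$ and then letting $m\to\infty$, gives the stated bound.

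I expect the main obstacle to be this global regularization: obtaining smooth approximants with only an arbitrarily small loss of strict positivity, uniformly on compacts of a noncompact manifold, while keeping a complete metric available on the domain where the smooth estimate is applied. That is where Demailly's regularization machinery is genuinely needed.
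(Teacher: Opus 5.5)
The paper does not prove this statement; it quotes it from Demailly. Your Step 1 is the standard smooth-weight argument (Bochner--Kodaira--Nakano on $(n,1)$-forms, density in the graph norm from completeness, then Hahn--Banach/Riesz), and it is fine.

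The gap is in Step 2. The regularization you posit does not exist in the generality of the statement: smooth $\varphi_j\downarrow\varphi$ with $i\partial\bar\partial\varphi_j\ge(\delta-\varepsilon_j)\omega$ on an exhaustion and $\varepsilon_j\to0$. Richberg-type patching of local convolutions requires them to converge uniformly, i.e.\ it requires $\varphi$ to be continuous. For a discontinuous quasi-psh $\varphi$, Demailly's regularization on a manifold loses positivity by an amount governed by the Lelong numbers of $\varphi$ (paired with the curvature of $T_X$), and this loss does not tend to $0$.

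This is a genuine obstruction, not a limitation of technique. Let $X$ be the blow-up of $\mathbb{C}^2$ at $0$, with blow-down $\pi$, exceptional curve $E\cong\mathbb{P}^1$ and bundle projection $p\colon X\to E$. Then $\omega=p^*\omega_{FS}+\pi^*(i\partial\bar\partial|z|^2)$ is a complete K\"ahler metric. The function $\varphi=\pi^*(\log|z|^2+|z|^2)$ is psh with $i\partial\bar\partial\varphi=2\pi[E]+\omega\ge\omega$. But no smooth (or even continuous, finite) $\tilde\varphi$ near $E$ can satisfy $i\partial\bar\partial\tilde\varphi\ge c\,\omega$ with $c>0$, since $\tilde\varphi|_E$ would be strictly subharmonic on a compact curve. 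So as soon as $X_j\supset E$ your $\varphi_j$ cannot exist, and any admissible approximation must stay singular along $E$.

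The missing idea is to avoid smoothing across such sets. Approximants can be taken smooth, with loss of order $c$, off the analytic set $E_c(\varphi)=\{\nu(\varphi,\cdot)\ge c\}$ (analytic by Siu). One solves on the complement, which has to carry a complete K\"ahler metric, and then uses that an $L^2_{\mathrm{loc}}$ solution of $\bar\partial f=g$ off an analytic set solves it across that set.

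Since $\omega$ restricted to such a complement (or to your $X_j$) is no longer complete, Step 1 must be upgraded to a version for a non-complete $\omega$ on a manifold carrying some complete K\"ahler metric $\widehat\omega$. Apply the estimate in the form $\|f\|^2\le\int\langle A^{-1}g,g\rangle\,dV$ to $\omega+\epsilon\widehat\omega$. This works because for $(n,q)$-forms $\langle A_\omega^{-1}g,g\rangle\,dV_\omega$ can only decrease as $\omega$ increases, while the norm of the $(n,0)$-form $f$ is metric-independent; then let $\epsilon\to0$. In the example this amounts to solving on $X\setminus E\cong\mathbb{C}^2\setminus\{0\}$, where $\varphi$ is smooth.

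The same upgrade is what your second difficulty (positivity only on $X_j$) actually needs. Your proposed fixes for it do not work as stated:
\begin{itemize}
\item your first fix assumes $\varphi$ is smooth near infinity, which is not given;
\item your second assumes a psh exhaustion $\rho$, which a complete K\"ahler manifold need not have;
\item for the enlarged metric $\omega'$ the hypothesis $i\partial\bar\partial\varphi_j\ge\delta'\omega'$ fails, so the uniform inequality of Step 1 does not apply as you wrote it.
\end{itemize}

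In the Stein setting where the paper actually uses the theorem, your scheme is repairable. Composing $\varphi$ with a holomorphic retraction onto $X\subset\mathbb{C}^N$ and convolving gives approximants whose loss of positivity tends to $0$ on compacts. One then solves on Stein sublevel sets with the non-complete version of Step 1 before passing to the weak limit.
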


\subsection{Preliminaries on probability}

Here we explain the notion of a large deviation principle and record some measure theoretic backgrounds and notations for our discussion.

Let $(\mathcal{X},\mathcal{B}_{\mathcal{X}})$ be a topological space with Borel $\sigma$-field. A \textit{rate function} $I$ is a lower semicontinuous mapping $I : \mathcal{X}\longrightarrow [0,\infty]$. A \textit{good rate function} is a rate function for which all the sublevel sets $I^{-1}([0,\alpha])$ are compact subsets of $\mathcal{X}$. For any set $\Gamma$, $\overline{\Gamma}$ denotes the closure of $\Gamma$, $\Gamma^\circ$ the interior of $\Gamma$. Let $\{\mu_k\}_{k>0}$ be a family of probability measures on $(\mathcal{X},\mathcal{B}_{\mathcal{X}})$.
\begin{definition}
   A family of probability measures $\{\mu_k\}$ is said to satisfy \textit{the large deviation principle (LDP)} with a rate function $I$ and speed $r_k$ if, for all $\Gamma\in \mathcal{B}_{\mathcal{X}}$,
\begin{equation*}
    -\inf_{x\in \Gamma^\circ}I(x)\leq \liminf_{k\rightarrow\infty}\frac{1}{r_k}\log \mu_k(\Gamma)\leq \limsup_{k\rightarrow \infty}\frac{1}{r_k}\log \mu_k(\Gamma)\leq -\inf_{x\in\overline{\Gamma}}I(x).
\end{equation*}
\end{definition}
That is, as $k\rightarrow\infty$, the probability measures $\mu_k$ concentrate near the minimum of $I$ exponentially: $\mu_k \approx e^{-r_k I}$. We also need a closely related concept called exponential tightness.
\begin{definition}
   A family of probability measures $\{\mu_k\}$ is said to be \textit{exponentially tight} at speed $r_k$ if for every $\alpha<\infty$, there exists a compact set $\mathcal{K}_\alpha \subset \mathcal{X}$ such that
\begin{equation*}
\limsup_{k\rightarrow\infty}\frac{1}{r_k}\log \mu_k(\mathcal{X}\setminus \mathcal{K}_\alpha)\leq -\alpha.
\end{equation*}
\end{definition}

Our proof of the LDP will be based on the following abstract G\"artner--Ellis theorem.
\begin{theorem}[Abstract G\"artner--Ellis theorem \protect{\cite[corollary 4.6.14]{dembo2009large}}]\label{thmGE}
Let $\{\mu_k\}$ be an exponentially tight family of Borel probability measures on the locally convex Hausdorff topological vector space $\mathcal{X}$. Define the logarithmic moment generating function $\Lambda_k: \mathcal{X}^*\longrightarrow (-\infty,\infty]$ by
\begin{equation*}
    \Lambda_k(\ell)=\log \mathbb{E}_{\mu_k}[e^{\langle \ell, \cdot \rangle}]=\log \int_\mathcal{X} e^{\ell(x)}\mu_k(dx),\quad \ell\in\mathcal{X}^*.
\end{equation*}
Suppose $\Lambda(\cdot)=\lim_{k\rightarrow\infty}\frac{1}{r_k}\Lambda_k(r_k\cdot)$ is finite valued and Gateaux differentiable. Then $\{\mu_k\}$ satisfies the LDP in $\mathcal{X}$ with the convex, good rate function $\Lambda^*$. Here $\Lambda^*$ denotes the Fenchel--Legendre transform
\begin{equation*}
    \Lambda^*(x):=\sup_{\ell\in\mathcal{X}^*}\{\ell(x)-\Lambda(\ell)\},\quad x\in\mathcal{X}.
\end{equation*}
\end{theorem}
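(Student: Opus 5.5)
The statement is \cite[Corollary 4.6.14]{dembo2009large}. I would prove it by reducing to the finite-dimensional G\"artner--Ellis theorem, passing to a projective limit, and then lifting the result to $\mathcal{X}$ with exponential tightness.

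\textbf{Finite-dimensional projections.} Fix finitely many $\ell_1,\dots,\ell_d\in\mathcal{X}^*$ and consider the map
\begin{equation*}
p(x)=(\ell_1(x),\dots,\ell_d(x))\in\mathbb{R}^d .
\end{equation*}
The logarithmic moment generating function of the push-forward $\mu_k\circ p^{-1}$ at $t\in\mathbb{R}^d$ equals $\Lambda_k(\sum_i t_i\ell_i)$. Hence its normalized limit is $\Lambda_p(t)=\Lambda(\sum_i t_i\ell_i)$, which is finite on all of $\mathbb{R}^d$.

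I would then check that $\Lambda_p$ is differentiable, not merely Gateaux differentiable.
\begin{itemize}
\item $\Lambda$ is a pointwise limit of convex functions (by H\"older), so it is convex.
\item Hence $\Lambda_p$ is a finite convex function on $\mathbb{R}^d$.
\item Gateaux differentiability of $\Lambda$ makes the directional derivatives of $\Lambda_p$ linear in the direction.
\item For finite convex functions on $\mathbb{R}^d$, linear directional derivatives imply differentiability.
\end{itemize}
In particular $\Lambda_p$ is essentially smooth, since its domain is all of $\mathbb{R}^d$ and steepness is vacuous. The classical G\"artner--Ellis theorem then gives an LDP for $\mu_k\circ p^{-1}$ with good convex rate function $\Lambda_p^*$.

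\textbf{Projective limit.} Index the finite families $\{\ell_1,\dots,\ell_d\}$ by inclusion. The spaces $\mathbb{R}^d$ form a projective system whose limit is the algebraic dual $(\mathcal{X}^*)'$ with the weak topology. $\mathcal{X}$ embeds continuously into it via $x\mapsto(\ell\mapsto\ell(x))$, injectively by Hahn--Banach.

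The Dawson--G\"artner theorem then gives an LDP for the image of $\mu_k$ in $(\mathcal{X}^*)'$ with good rate function
\begin{equation*}
\sup_p \Lambda_p^*(p(y)) = \sup_{\ell\in\mathcal{X}^*}\{y(\ell)-\Lambda(\ell)\}.
\end{equation*}
The equality holds because the supremum over finite families of the finite-dimensional Legendre transforms is the full Legendre transform. Restricted to $\mathcal{X}$, this is exactly $\Lambda^*$.

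\textbf{Lifting to $\mathcal{X}$.} This is the step I expect to be the main technical obstacle. The topology of $\mathcal{X}$ may be strictly finer than the weak topology $\sigma(\mathcal{X},\mathcal{X}^*)$, and the limit rate function could a priori be finite on $(\mathcal{X}^*)'\setminus\mathcal{X}$.

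Exponential tightness handles both issues.
\begin{itemize}
\item Each compact $\mathcal{K}_\alpha\subset\mathcal{X}$ is compact in the coarser Hausdorff topology as well, so the identity map is a homeomorphism on $\mathcal{K}_\alpha$.
\item The upper bound for closed sets outside $\bigcup_\alpha\mathcal{K}_\alpha$ forces the rate function to be $+\infty$ off $\mathcal{X}$. Indeed, its sublevel sets lie in $\mathcal{K}_\alpha$ for large $\alpha$.
\item The inverse contraction principle (Dembo--Zeitouni, Cor.\ 4.2.6) then transfers the LDP to the original topology on $\mathcal{X}$ with the same good rate function $\Lambda^*$.
\end{itemize}

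\textbf{Convexity.} $\Lambda^*$ is convex, being a supremum of affine functions.
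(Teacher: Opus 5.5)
Your proposal is correct and is essentially the standard argument behind the cited result, Dembo--Zeitouni's Corollary 4.6.14: Gateaux differentiability makes every finite-dimensional restriction of $\Lambda$ differentiable, hence essentially smooth; finite-dimensional G\"artner--Ellis plus Dawson--G\"artner give the LDP on $(\mathcal{X}^*)'$; and exponential tightness with the inverse contraction principle lifts it back to $\mathcal{X}$ (the paper itself gives no proof beyond the citation). One small correction: the fact that the rate function is $+\infty$ off $\iota(\mathcal{X})$ comes from the \emph{lower} bound applied to the open sets $(\mathcal{X}^*)'\setminus\iota(\mathcal{K}_\alpha)$, which gives $\{\Lambda^*<\alpha\}\subset\iota(\mathcal{K}_\alpha)$, not from the upper bound for closed sets.
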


Now let $X$ be a complex manifold. We will be working in the following setting. Let $M:=\mathcal{M}_{\mathrm{loc}}(X,\mathbb{R})$ be the real vector space of signed Radon measures with locally finite total variation. Endow $M$ with the weak topology $\tau$ generated by $C^\infty_c(X,\mathbb{R})$. That is, $(M,\tau)^*=C^\infty_c(X,\mathbb{R})$. Let $M_+:=\mathcal{M}^+_{\mathrm{loc}}(X,\mathbb{R})$ be the closed cone of positive measures. Then it is easy to show that the subspace topology on $M_+$ induced from $\tau$ is identical to the \textit{vague topology} on $M_+$, generated by $C_c(X,\mathbb{R})$. This can be shown by smooth locally uniform approximation of test functions and a local mass bound for elements of $M_+$. The following theorem gives a useful criterion for compactness in the vague topology.
\begin{theorem}[\protect{\cite[Theorem A2.3]{MR1876169}}]\label{thmcompact}
A set $A\subset M_+$ is relatively compact in vague topology if and only if $\sup_{\mu\in A}\int_X f\mu <\infty$ for every nonnegative $f\in C_c(X)$.
\end{theorem}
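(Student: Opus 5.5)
For necessity we use only continuity of the evaluation maps. For sufficiency we embed $M_+$ into a product of compact intervals and combine Tychonoff's theorem with the Riesz representation theorem. Throughout, $X$ is a complex manifold, hence locally compact, Hausdorff and second countable. Write $C_c^+(X)$ for the nonnegative functions in $C_c(X)$.

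\emph{Necessity.} Suppose $A$ is relatively compact, and fix $f\in C_c^+(X)$. The map $\mu\mapsto\int_X f\,d\mu$ is continuous on $M_+$ in the vague topology, by definition of that topology. It is therefore bounded on the compact set $\overline{A}$, and in particular $\sup_{\mu\in A}\int_X f\,d\mu<\infty$.

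\emph{Sufficiency, the embedding.} Assume $c_f:=\sup_{\mu\in A}\int_X f\,d\mu<\infty$ for every $f\in C_c^+(X)$. Consider the map
\begin{equation*}
    \iota: M_+\longrightarrow [0,\infty]^{C_c^+(X)},\qquad \iota(\mu)=\Bigl(\int_X f\,d\mu\Bigr)_{f\in C_c^+(X)}.
\end{equation*}
\begin{enumerate}
\item $\iota$ is injective. A Radon measure is determined by its integrals against $C_c^+(X)$, by the uniqueness part of the Riesz representation theorem.
\item $\iota$ is a homeomorphism onto its image. The vague topology on $M_+$ is by definition the initial topology for the maps $\mu\mapsto\int f\,d\mu$, $f\in C_c(X)$. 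Each $f\in C_c(X)$ splits as $f=f^+-f^-$ with $f^\pm\in C_c^+(X)$, so it suffices to use $f\in C_c^+(X)$. This initial topology is exactly the one pulled back from the product topology.
\item $\iota(A)$ lies in the compact set $Q:=\prod_{f}[0,c_f]$, which is compact by Tychonoff's theorem.
\end{enumerate}

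\emph{Sufficiency, the closure of $\iota(A)$.} Let $L=(L_f)_f$ lie in the closure of $\iota(A)$ in $Q$. Each relation of the form
\begin{equation*}
    L_{af+bg}=aL_f+bL_g \qquad (a,b\geq0,\ f,g\in C_c^+(X))
\end{equation*}
holds for every point of $\iota(A)$. It involves finitely many coordinates, so it defines a closed subset of the product and hence also holds for $L$. Setting $\Lambda(f):=L_{f^+}-L_{f^-}$ for $f\in C_c(X)$, we get a well-defined, linear and positive functional on $C_c(X)$. By the Riesz representation theorem on the locally compact Hausdorff space $X$, there is a positive Radon measure $\mu\in M_+$ with $\Lambda(f)=\int_X f\,d\mu$, that is, $L=\iota(\mu)$.

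Hence the closure of $\iota(A)$ is a compact subset of $\iota(M_+)$. Since $\iota$ is a homeomorphism onto its image, its preimage is a compact subset of $M_+$ containing $A$. Therefore $A$ is relatively compact.

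The main point requiring care is this last step. We must ensure that limit points of $\iota(A)$ do not escape $\iota(M_+)$, i.e.\ that pointwise limits of integration functionals remain integration against a genuine Radon measure. This is handled by the closedness of the finitely-determined linearity relations together with the Riesz representation theorem. No metrizability is needed, although on our second countable $X$ the vague topology on $M_+$ is in fact metrizable, so one could equally argue with sequences.
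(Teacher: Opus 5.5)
Your proof is correct and complete.

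\begin{itemize}
\item The map $\iota$ is a homeomorphism onto its image, because every $f\in C_c(X)$ is a difference of two elements of $C_c^+(X)$.
\item The closure of $\iota(A)$ in the Tychonoff cube $Q$ stays inside $\iota(M_+)$. The additivity and homogeneity relations involve finitely many coordinates, so they are closed conditions and pass to limit points. The resulting positive functional is then represented by a Radon measure via Riesz.
\item Pulling this compact set back gives a compact subset of the Hausdorff space $M_+$ that contains $A$.
\end{itemize}

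The one step you leave implicit is routine: $\Lambda$ is well defined and linear because $L_g-L_h$ depends only on $g-h$. Indeed, $g-h=g'-h'$ gives $g+h'=g'+h$, and additivity then yields $L_g+L_{h'}=L_{g'}+L_h$.

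The paper gives no proof of its own; it simply quotes Theorem A2.3 of \cite{MR1876169}. There the statement is made for locally compact second countable Hausdorff spaces, together with the fact that the vague topology is Polish. In that setting one would naturally argue with sequences: extract a diagonal subsequence along a countable dense family of test functions, then apply Riesz.

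Your Tychonoff-plus-Riesz argument is essentially the Banach--Alaoglu argument. It needs neither second countability nor metrizability, and it is fully adequate for the paper's only use of the theorem, namely the compactness of the sets $\mathcal{C}_\alpha$ in Proposition \ref{thmexptight}.
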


\section{Main results}

\subsection{Definition of DPP on Stein manifolds}\label{section3.1}

For the basics of determinantal point processes, see \cite{hough2009zeros}. Here we recall the existence result of Macchi and Soshnikov.
\begin{theorem}[\protect{\cite{macchi1975coincidence,soshnikov2000determinantal}}]\label{MacSo}
Let $\Omega$ be a locally compact Polish space, $\mu$ a Radon measure on $\Omega$, and $K:\Omega\times\Omega\longrightarrow \mathbb{C}$ a locally integrable function. Suppose $K$ determines a self-adjoint integral operator $\mathbb{K}$ on $L^2(\Omega,\mu)$ that is locally trace class. Then $K$ defines a determinantal process $\Theta$ on $\Omega$ if and only if the spectrum of $\mathbb{K}$ is contained in $[0,1]$. That is, for any family of mutually disjoint subsets $D_1,\cdots,D_m$ of $\Omega$,
\begin{equation*}
    \mathbb{E}\left[\prod_{j=1}^m \Theta (D_j)\right]=\int_{\prod_j D_j} \det \left[K(x_i,x_j)\right]_{i,j=1,\cdots,m} d\mu(x_1)\cdots d\mu(x_m).
\end{equation*}
\end{theorem}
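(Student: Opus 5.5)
Since this is the classical Macchi--Soshnikov theorem, I would follow the standard route through restrictions to compact sets and Fredholm determinants. Fix a compact (or relatively compact Borel) set $D\subset\Omega$. Local trace class means the compression $\mathbb{K}_D=\chi_D\mathbb{K}\chi_D$ is a self-adjoint trace class operator. By Mercer-type arguments it has an expansion $K_D(x,y)=\sum_j\lambda_j\varphi_j(x)\overline{\varphi_j(y)}$ with $\{\varphi_j\}$ orthonormal in $L^2(D,\mu)$ and $\sum_j|\lambda_j|<\infty$. Everything will be reduced to statements about these eigenvalues.

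\emph{Necessity.} Suppose a process $\Theta$ with the stated correlation functions exists.
\begin{enumerate}
\item For $\lambda_j\ge 0$: the $m$-point correlation function $\det[K(x_i,x_j)]$ must be nonnegative $\mu^{\otimes m}$-a.e. Integrating it against $\prod_i f(x_i)\overline{f(x_j)}$-type test functions (equivalently, using the $m=1,2$ Gram-type inequalities) shows $\langle\mathbb{K}_D f,f\rangle\ge 0$. Hence all $\lambda_j\ge0$.
\item For the count on $D$: expanding $\mathbb{E}[(1-z)^{\Theta(D)}]$ via factorial moments $\mathbb{E}\binom{\Theta(D)}{m}=\frac1{m!}\int_{D^m}\det[K(x_i,x_j)]\,d\mu^{\otimes m}$ gives
\[
\mathbb{E}\bigl[s^{\Theta(D)}\bigr]=\det\bigl(I-(1-s)\mathbb{K}_D\bigr)=\prod_j\bigl(1-\lambda_j+\lambda_j s\bigr).
\]
The Fredholm series converges for all $s$, by Hadamard's bound $\det[K(x_i,x_j)]\le\prod_i K(x_i,x_i)$ together with $\int_D K(x,x)\,d\mu=\operatorname{tr}\mathbb{K}_D<\infty$.
\item For $\lambda_j\le1$: if some $\lambda_j>1$, then $s=1-1/\lambda_j\in(0,1)$ is a zero of the right-hand side. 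But the left-hand side is strictly positive for $s>0$, a contradiction.
\end{enumerate}
Since this holds for every compact $D$, and $\mathbb{K}$ is approximated strongly by the compressions $\mathbb{K}_D$, the spectrum of $\mathbb{K}$ lies in $[0,1]$.

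\emph{Sufficiency.} Assume $0\le\mathbb{K}\le I$. Then each compression satisfies $0\le\mathbb{K}_D\le I$, so $\lambda_j\in[0,1]$. I would build the process on $D$ as a mixture of projection processes:
\begin{enumerate}
\item Take independent Bernoulli$(\lambda_j)$ variables $I_j$. Since $\sum\lambda_j<\infty$, only finitely many $I_j$ equal $1$ almost surely.
\item Given the $I_j$, place $N=\sum I_j$ points with joint density $\frac1{N!}\det[K^I(x_i,x_l)]$, where $K^I=\sum_j I_j\varphi_j\otimes\overline{\varphi_j}$. This is a finite projection ensemble, and its correlation functions are $\det[K^I(x_i,x_l)]$ by the standard Gram-determinant computation.
\item Averaging over the $I_j$ uses multilinearity of the determinant in the rank-one pieces and independence. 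This recovers $\det[K_D(x_i,x_l)]$ as the correlation functions.
\end{enumerate}

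The step I expect to be the main obstacle is gluing the processes on the various $D$ into a single point process on $\Omega$. For $D\subset D'$, the restriction to $D$ of the process built on $D'$ has correlation functions $\det[K(x_i,x_j)]$ restricted to $D$, the same as the process built directly on $D$. To conclude that the two laws agree, I need that correlation functions determine the law. Here I would use the Hadamard bound again. It gives $\mathbb{E}\binom{\Theta(A)}{m}\le\frac{(\operatorname{tr}\mathbb{K}_A)^m}{m!}$, so the joint counts over finitely many disjoint sets have entire factorial-moment generating functions. Their joint distributions are therefore determined by the moments, which is Lenard's criterion. With consistency established, I would exhaust $\Omega$ (locally compact Polish, hence $\sigma$-compact) by compacts $D_1\subset D_2\subset\cdots$ and apply the Kolmogorov extension theorem to the consistent family of laws on configuration spaces. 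The resulting $\Theta$ has the required correlation identity for all disjoint $D_1,\dots,D_m$, by monotone convergence from the compact case.
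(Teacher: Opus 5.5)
The paper does not prove this statement; it only quotes it from Macchi and Soshnikov. Your outline is the standard modern route, essentially the argument in \cite{hough2009zeros}:
zeros of the generating function for necessity, a Bernoulli mixture of projection ensembles on each compact set for sufficiency, uniqueness from the factorial-moment bound, and Kolmogorov extension. The classical construction goes instead through Janossy densities $\det(I-\mathbb{K}_D)\det[L_D(x_i,x_j)]$ with $L_D=\mathbb{K}_D(I-\mathbb{K}_D)^{-1}$. That route needs $\|\mathbb{K}_D\|<1$ and a limiting argument when $1$ is an eigenvalue. Your mixture handles eigenvalue $1$ directly and exhibits $\Theta(D)$ as a sum of independent Bernoulli$(\lambda_j)$ variables. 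The sufficiency half and the gluing are sound. One step of the necessity half is wrong as written.

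\emph{Necessity, step 1.} Nonnegativity of $\rho_1$ and $\rho_2$ does not imply $\mathbb{K}_D\geq 0$. On $\{1,2,3\}$ with counting measure, take
\[
K=\begin{pmatrix}1&1&-1\\1&1&1\\-1&1&1\end{pmatrix}.
\]
All $1\times1$ and $2\times2$ principal minors are nonnegative (the latter vanish), yet $v^*Kv=-3$ for $v=(1,-1,1)$. The induced step kernel on $[0,3)$ with Lebesgue measure gives a diffuse example. Your step 2 uses Hadamard's bound $\det[K(x_i,x_j)]\leq\prod_i K(x_i,x_i)$, which needs $[K(x_i,x_j)]$ to be positive semidefinite, so the error propagates there too.

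There are two easy repairs.
\begin{enumerate}[label=(\alph*)]
\item Use all $m$. Every principal minor of the Hermitian matrix $[K(x_i,x_j)]_{i,j\leq m}$ is a correlation function, so the matrix is positive semidefinite for $\mu^{\otimes m}$-a.e.\ $(x_1,\dots,x_m)$. This already justifies Hadamard. Integrating $\sum_{i,l}\overline{f(x_i)}K(x_i,x_l)f(x_l)\geq 0$ over $D^m$ for bounded $f$ gives
\[
m(m-1)\,\mu(D)^{m-2}\langle\mathbb{K}_Df,f\rangle+m\,\mu(D)^{m-1}\int_D|f|^2K(x,x)\,d\mu\geq 0,
\]
and letting $m\to\infty$ yields $\mathbb{K}_D\geq 0$.
\item Drop step 1 and run step 3 also for $s>1$. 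By Tonelli,
\[
\mathbb{E}[s^{\Theta(D)}]=\sum_m(s-1)^m\,\mathbb{E}\binom{\Theta(D)}{m}=\prod_j\bigl(1+(s-1)\lambda_j\bigr).
\]
Absolute convergence follows from the sign-free bound $|\det[K(x_i,x_j)]|\leq\prod_i\sum_j|\lambda_j||\varphi_j(x_i)|^2$, obtained via Cauchy--Binet. The left side is at least $1$, so a negative $\lambda_j$ would give a zero at $s=1-1/\lambda_j>1$. This also gives $\mathbb{E}[(2-s)^{\Theta(D)}]<\infty$, which justifies your expansion for $s\in(0,1)$.
\end{enumerate}

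Finally, your argument and the statement both silently fix the diagonal of $K$. For diffuse $\mu$, the operator $\mathbb{K}$ does not determine $K(x,x)$. The identities $\operatorname{tr}\mathbb{K}_D=\int_D K(x,x)\,d\mu$ and $\frac{1}{m!}\int_{D^m}\det[K(x_i,x_j)]\,d\mu^{\otimes m}$ equal to the $m$-th elementary symmetric function of the $\lambda_j$, and the matching of correlation functions in sufficiency, all require $K(x,x)=\sum_j\lambda_j|\varphi_j(x)|^2$ for $\mu$-a.e.\ $x\in D$. ``Mercer-type arguments'' do not give this for a merely locally integrable kernel, so it must be imposed as a convention. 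In the paper's application this holds automatically, since $K^{dV,e}_{k\phi}(x,y)=\sum_j a_j(x)\overline{a_j(y)}$ pointwise.
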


Let $X$ be a Stein manifold and $\phi$ be a psh exhaustion function satisfying Condition $\mathfrak{A}$. Choose any smooth volume form $dV$ and a corresponding locally integrable global unitary frame $e$ of $K_X$ as in Section \ref{prelimcplx}. To apply the Macchi--Soshnikov theorem \ref{MacSo} to $K^{dV,e}_{k\phi}$, we need to show that $U_e \circ P_{k\phi} \circ U_e^{-1}$ is, or equivalently, $P_{k\phi}$ is locally trace class. We also record the formula for the trace of Toeplitz operators for later use.
\begin{lemma}\label{lemma1}
    For every compact set $K\subset X$, the restricted Bergman projection on $K$ is trace class and
\begin{equation*}
    \operatorname{Tr}(\mathbf{1}_K P_{k\phi}\mathbf{1}_K)=\int_K \beta_{k\phi}<\infty.
\end{equation*}
More generally, for a bounded function $g$ on $X$, define a Toeplitz operator $T_{k\phi,g}$ with symbol $g$ by
\begin{equation*}
    T_{k\phi,g}:=P_{k\phi} M_g |_{\mathcal{H}(k\phi)},
\end{equation*}
where $M_g$ is a multiplication operator by $g$. If $g$ is compactly supported continuous, then $T_{k\phi,g}$ is trace class and
\begin{equation}\label{traceformula}
    \operatorname{Tr}(T_{k\phi,g})=\int_X g\,\beta_{k\phi}.
\end{equation}
\end{lemma}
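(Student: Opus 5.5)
The plan is to write the restricted projection as $A^*A$ for a concrete Hilbert--Schmidt operator $A$. Fix a compact $K\subset X$ and let $\iota_K:\mathcal{H}(k\phi)\to L^2(K,K_X,|\cdot|_{dV}e^{-k\phi}dV)$ be the restriction map $s\mapsto \mathbf{1}_K s$. The operator $\mathbf{1}_K P_{k\phi}\mathbf{1}_K$ on $L^2(X,K_X,|\cdot|_{dV}e^{-k\phi}dV)$ factors as $(\iota_K P_{k\phi})(\iota_K P_{k\phi})^*$, because $P_{k\phi}$ is a projection. So it suffices to show that $\iota_K$ is Hilbert--Schmidt, and then to compute its Hilbert--Schmidt norm.

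For an orthonormal basis $(s_j)$ of $\mathcal{H}(k\phi)$, monotone convergence gives
\begin{equation*}
\|\iota_K\|_{HS}^2=\sum_j\int_K c_n\, s_j\wedge\overline{s_j}\,e^{-k\phi}=\int_K\beta_{k\phi}.
\end{equation*}
To see that this is finite, cover $K$ by finitely many coordinate charts. In each chart, apply the extremal characterization \eqref{Bk} together with the submean value inequality on a small polydisc around $z$. This gives $|f_s(z)|^2e^{-k\phi(z)}\le C_K\|s\|^2_{k\phi}$ uniformly for $z$ in a compact piece of the chart, where $\phi$ is bounded above and below there. Hence $B_{k\phi}\le C_K$ on $K$, and so $\int_K\beta_{k\phi}<\infty$. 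A positive operator of the form $AA^*$ with $A$ Hilbert--Schmidt is trace class with trace $\|A\|_{HS}^2$. Since $\|\iota_KP_{k\phi}\|_{HS}=\|\iota_K\|_{HS}$, this proves the first claim. The only point needing some care is that the sum over $j$ may be infinite; it is handled by Tonelli, since all terms are nonnegative.

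For the Toeplitz statement, take $g\in C_c(X)$ with support in a compact set $K$ and write $g=g_+-g_-$. Each of $g_\pm$ is nonnegative, bounded, and supported in $K$. For $h\ge0$ bounded with support in $K$, we have $T_{k\phi,h}=\iota_K^*M_h\iota_K$ as an operator on $\mathcal{H}(k\phi)$, since $\langle P_{k\phi}M_h s,t\rangle=\int_K h\,c_n s\wedge\bar t e^{-k\phi}$. Because $\iota_K$ is Hilbert--Schmidt and $M_h$ is bounded, this operator is trace class. Its trace, computed in the basis $(s_j)$, is
\begin{equation*}
\sum_j\langle T_{k\phi,h}s_j,s_j\rangle=\sum_j\int_X h\,c_n s_j\wedge\overline{s_j}e^{-k\phi}=\int_X h\,\beta_{k\phi},
\end{equation*}
again by Tonelli. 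Linearity of the trace then gives \eqref{traceformula} for $g$; complex-valued $g$ would be handled the same way by splitting into real and imaginary parts.

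Finally, the statement for the scalar kernel $K^{dV,e}_{k\phi}$ follows because the two operators are unitarily conjugate by $U_e$, and $U_e$ commutes with multiplication by $\mathbf{1}_K$. I expect no step to be a real obstacle. The only analytic input is the local uniform bound on the Bergman density on compacts, which comes from the submean value inequality.
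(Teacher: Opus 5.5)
Your proof is correct and follows essentially the same route as the paper, which writes $\mathbf{1}_K P_{k\phi}\mathbf{1}_K=A_KA_K^*$ with $A_K=\mathbf{1}_KP_{k\phi}$ Hilbert--Schmidt and $\|A_K\|_2^2=\sum_j\|\mathbf{1}_Ks_j\|^2_{k\phi}=\int_K\beta_{k\phi}$. Your submean-value argument for $\int_K\beta_{k\phi}<\infty$ and your factorization $T_{k\phi,h}=\iota_K^*M_h\iota_K$ for the Toeplitz case make explicit what the paper leaves implicit (it only says ``the Toeplitz operator case can be proved similarly'').
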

\begin{proof}
Define $A_K:=\mathbf{1}_KP_{k\phi}:L^2\longrightarrow L^2$. Choose an orthonormal basis $s_j$ of $\mathcal{H}(k\phi)$ and extend it to an orthonormal basis of $L^2$. Then $A_K$ is a Hilbert--Schmidt operator by
\begin{equation*}
    \|A_K \|^2_2=\sum_j \|\mathbf{1}_K s_j\|^2_{k\phi}=\sum_j\int_X \mathbf{1}_K c_n\,s_j\wedge \overline{s_j}\,e^{-k\phi}=\int_K \beta_{k\phi}<\infty.
\end{equation*}
Hence we have
\begin{equation*}
    \operatorname{Tr}(\mathbf{1}_K P_{k\phi}\mathbf{1}_K)=\operatorname{Tr}(A_K A_K^*)= \|A_K \|^2_2=\int_K \beta_{k\phi}.
\end{equation*}
The Toeplitz operator case can be proved similarly.
\end{proof}

With these results, we can define a determinantal point process on a Stein manifold that only depends on the choice of $\phi$.
\begin{definition}[DPP $\Theta_k$ on $(X,\phi)$]\label{definitionDPP}
Let $X$ be a Stein manifold and $\phi$ be a psh exhaustion function satisfying Condition $\mathfrak{A}$. Choose any smooth volume form $dV$ and corresponding locally integrable global unitary frame $e$ of $K_X$. For each $k>0$, define the determinantal point process $\Theta_k$ with kernel $K^{dV,e}_{k\phi}$ with respect to the measure $e^{-k\phi}dV$ on $X$. They are well-defined by Theorem \ref{MacSo} and Lemma \ref{lemma1}. By \eqref{choice1} and \eqref{choice2}, $\Theta_k$ is independent of the choice of $dV$ and $e$, and only depends on $\phi$.
\end{definition}

\subsection{Pluripotential theory on Stein manifolds}

We begin with the main pluripotential theoretic result in this paper. It shows that the Condition $\mathfrak{A}$ is the right condition anticipated in \cite[Remark 2]{eum2025limiting}. From now on, we always assume $\phi$ satisfies the Condition $\mathfrak{A}$. For a real $u\in C^\infty(X)$, denote the psh envelope by
\begin{equation*}
    P(\phi+u):=\left(\sup\{v\in PSH(X): v\leq \phi+u \}\right)^*.
\end{equation*}

\begin{theorem}\label{plurimain}
For every $u\in C^\infty_c(X)$,
\begin{equation*}
    P(\phi+u)=\phi \quad \text{outside a compact subset of}\; X.
\end{equation*}
That is, $P(\phi+u)-\phi$ is compactly supported. The compact set can be chosen uniformly when $u$ ranges over a bounded subset of a fixed finite dimensional subspace of $C^\infty_c(X)$. Moreover, 
\begin{equation*}
    P(\phi+u)\in C_{\mathrm{loc}}^{1,1}(X).
\end{equation*}
The $C^{1,1}$ norm of $P(\phi+u)-\phi$ can be chosen uniformly as well.
\end{theorem}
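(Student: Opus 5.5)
The plan has two parts. First, build an explicit plurisubharmonic lower barrier that equals $\phi$ off a compact set; this gives the compact support statement. Second, prove the $C^{1,1}$ bound by a Bedford--Taylor translation argument, using global holomorphic vector fields.

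\emph{Compact support.} Let $m=\max(0,-\min u)$ and choose $R\geq1$ with $\operatorname{supp}u\subset\{\phi<R\}$. I look for a competitor $v=\phi-h(\phi)$ with $h:[1,\infty)\to[0,\infty)$ smooth, convex and nonincreasing, with $h\geq m$ on $[1,R]$ and $h\equiv0$ on $[R',\infty)$. Then $v\leq\phi+u$, and $v=\phi$ on $\{\phi\geq R'\}$. Since $h'\leq0$, $h''\geq0$, and Condition $\mathfrak{A}$ gives $i\partial\phi\wedge\bar\partial\phi\leq B\phi\,i\partial\bar\partial\phi$, we get
\begin{equation*}
i\partial\bar\partial v=(1-h'(\phi))\,i\partial\bar\partial\phi-h''(\phi)\,i\partial\phi\wedge\bar\partial\phi\geq\bigl(1-B\phi\, h''(\phi)\bigr)\,i\partial\bar\partial\phi .
\end{equation*}
It therefore suffices to have $t\,h''(t)\leq 1/B$. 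I take $h(t)=g(\log t)$ with $g$ convex, nonincreasing and vanishing for $s\geq s_0$. For example, $g$ is a smoothing of $c\,(s_0-s)_+^2$ with $c$ small and $s_0=\log R+\sqrt{m/c}$. Then
\begin{equation*}
t\,h''(t)=\frac{g''(\log t)-g'(\log t)}{t}\lesssim \frac{c\,(1+s_0)}{t},
\end{equation*}
which is at most $1/B$ once $c$ is small, using $t\geq1$. The logarithmic scale is exactly what makes the quadratic term $B\phi h''$ harmless. With such $v$ we have $v\leq P(\phi+u)\leq\phi+u$, so $P(\phi+u)=\phi$ on $\{\phi\geq R'\}$. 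The constants $R$, $m$, $R'$ depend only on $\sup|u|$ and $\operatorname{supp}u$, which gives the claimed uniformity on bounded subsets of a finite dimensional subspace.

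\emph{$C^{1,1}$ regularity.} Write $P=P(\phi+u)$. Fix a compact $K\supset\{\phi\leq R'\}$ and a relatively compact neighbourhood $D\Subset\{\phi<R''\}$ of $K$. By Cartan's Theorem A applied to $T^{1,0}X$, there are finitely many global holomorphic vector fields $V_1,\dots,V_N$ spanning $T^{1,0}_xX$ at every $x\in\overline D$. Their real flows $F^j_t$ are defined on $\overline D$ for $|t|<t_0$, and they act by biholomorphisms, so $P\circ F^j_t$ is psh near $\overline D$. Following Bedford--Taylor, set
\begin{equation*}
w_t=\tfrac12\bigl(P\circ F_t+P\circ F_{-t}\bigr)-Ct^2 .
\end{equation*}
Since $P\leq\phi+u$ and $\phi+u$ is smooth, a Taylor expansion gives $w_t\leq\phi+u$ on $D$ for suitable $C$.

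\emph{Main obstacle: gluing.} To conclude $w_t\leq P$ I must extend $w_t$ to a global psh function below $\phi+u$. I would do this by $\max(w_t,\tilde v)$ on $D$ and $\tilde v$ outside, where $\tilde v=\phi-\varepsilon\,h_1(\phi)$ is built as above with $h_1>0$ on $\{\phi\leq R''\}$. The difficulty is arranging $w_t<\tilde v$ near $\partial D$ but $\tilde v\leq w_t$ where needed. Near $\partial D$ we have $P=\phi$, so $w_t=\phi+O(t^2)-Ct^2$. Thus $w_t<\tilde v$ there once $Ct^2$ dominates $\varepsilon h_1$ plus the $O(t^2)$ error, and I would take $\varepsilon=\varepsilon(t)\sim t^2$. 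The fact that $\tilde v\to P$ uniformly as $t\to0$ keeps the second-difference estimate intact. I expect the main work to be balancing these $t^2$ terms, uniformly in $x$, with the Taylor errors of both $\phi+u$ and the flows.

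\emph{Conclusion.} Once gluing is done, we get $P(F_tx)+P(F_{-t}x)-2P(x)\leq C't^2$ along every $V_j$. Hence $P$ is semiconcave on $K$. Combined with plurisubharmonicity, which bounds the complex Hessian from below, this gives $\|P-\phi\|_{C^{1,1}}\leq C''$, with constants depending only on $\|u\|_{C^2}$ and $\operatorname{supp}u$. That yields the uniform statement.
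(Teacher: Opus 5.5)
\textbf{Verdict: the proposal has a genuine gap, in two places in the $C^{1,1}$ part.}

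\textbf{Compact support.} Your barrier is correct. It is a variant of the paper's $\phi-C\eta(\phi/L)$. There a fixed cutoff is rescaled linearly and one uses $\phi\le 2L$ on the transition region, so the logarithmic profile is not essential.

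\textbf{Gap 1: the gluing.} This is exactly the step you call the main obstacle. The function equal to $\max(w_t,\tilde v)$ on $D$ and to $\tilde v$ outside is not an admissible competitor. On $D$ you need $\tilde v=\phi-\varepsilon h_1(\phi)\le\phi+u$, i.e.\ $\varepsilon h_1(\phi)\ge -u$. With $\varepsilon\sim t^2\to0$ this fails wherever $u<0$. (Also $\tilde v\to\phi$, not $P$.) So nothing forces $w_t\le P$.

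The fix is to drop $\tilde v$ and glue with $P$ itself. Take a collar $W$ of $\partial D$ inside $\{\phi>R'\}$, and $t$ so small that $F_{\pm t}(\overline W)\subset\{\phi>R'\}$. For $x\in W\cap D$ one has $P=\phi$ at $x$ and at $F_{\pm t}x$. Hence $w_t(x)=\tfrac12\bigl(\phi(F_tx)+\phi(F_{-t}x)\bigr)-Ct^2\le\phi(x)=P(x)$, once $C$ also dominates the second-difference constant of $\phi$ on $\overline W$. Then $\max(w_t,P)$ on $D$, extended by $P$, is psh and lies below $\phi+u$. So it is $\le P$, which gives $w_t\le P$. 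This is the paper's argument, and you had already noted its key input, that $P=\phi$ near $\partial D$.

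\textbf{Gap 2: too few directions, and no Lipschitz bound.} Second differences along the flows of finitely many fixed fields $V_j$ do not give semiconcavity in every direction. Semiconcavity in finitely many directions plus plurisubharmonicity also does not give $C^{1,1}$. For example, in $\mathbb{C}$ the function $2xy\log(-\log|z|)+A|z|^2$ is subharmonic near $0$, with bounded $\partial_x^2$ and $\partial_y^2$ but unbounded $\partial_x\partial_y$. Moreover, flow lines are curved. Turning $P(F_tx)+P(F_{-t}x)-2P(x)\le Ct^2$ into a straight second difference requires a Lipschitz bound on $P$, which your plan never establishes.

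The paper handles both points.
\begin{enumerate}[label=\textup{(\roman*)}]
\item It first proves $P$ is locally Lipschitz, by the same gluing applied to $P(S_\zeta x)-C_1|\zeta|$.
\item It composes the complex flows into a holomorphic map $S_\zeta$, $\zeta\in\mathbb{C}^m$, with $D_\zeta S(x,0)$ surjective. The inverse function theorem then gives, for every small coordinate vector $h$, a $\zeta$ with $S_\zeta x=x+h$ and $|\zeta|=O(|h|)$.
\item Consequently $S_{-\zeta}x=x-h+O(|h|^2)$, and the Lipschitz bound yields $P(x+h)+P(x-h)-2P(x)\le C|h|^2$, uniformly in $x$ and in all directions.
\end{enumerate}
Alternatively, you could run your one-parameter argument for the flows of all fields $\sum_j c_jV_j$ with $c$ in a compact subset of $\mathbb{C}^N$, with uniform constants, together with a Lipschitz bound. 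Only once semiconcavity holds in every direction does $D^2_{e,e}P+D^2_{Je,Je}P\ge0$ give the two-sided Hessian bound.
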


\begin{proof}
\emph{(1) Proof of the compact support.---}
Choose a smooth nonincreasing function $\eta:\mathbb{R}\rightarrow [0,1]$ with $\eta=1$ on $(-\infty,1]$ and $\eta=0$ on $[2,\infty)$. Let $C:=\|u\|_\infty$. Choose $L$ large enough so that $\operatorname{supp} u \subset \{\phi<L\}$ and 
\begin{equation}\label{Lcondition}
    \frac{3BC}{L}\|\eta''\|_\infty <1.
\end{equation}

Now set $h:=-C\eta(\phi/L)$. On $\operatorname{supp} u$, $\phi<L$ and we have $h=-C \leq u$, while outside $\operatorname{supp} u$ we have $h\leq 0=u$. Hence $h\leq u$ on $X$. We claim that $\phi+h$ is strictly plurisubharmonic. Indeed,
\begin{equation*}
    i\partial\bar\partial (\phi+h)=\left(1-\frac{C}{L}\eta'(\phi/L)\right)  i\partial\bar\partial \phi-\frac{C}{L^2}\eta''(\phi/L)i\partial \phi\wedge \bar\partial \phi.
\end{equation*}
Since $h$ is nonconstant only on $L\leq \phi\leq 2L$, the second term is bounded below by (using \eqref{Lcondition})
\begin{equation*}
-\frac{C}{L^2}\|\eta''\|_\infty B\phi \,i\partial\bar\partial \phi \geq -\frac{1}{3L}\phi \,i\partial\bar\partial \phi \geq -\frac{2}{3}i\partial\bar\partial \phi.
\end{equation*}
This proves $ i\partial\bar\partial (\phi+h)>0$. Hence $\phi+h$ is plurisubharmonic and lies below $\phi+u$, and we have
\begin{equation*}
    \phi+h \leq P(\phi+u) \leq \phi+u,
\end{equation*}
since $\phi+u$ is continuous. On $\{\phi\geq 2L\}$, both $h$ and $u$ vanish and hence $\operatorname{supp} P(\phi+u)-\phi \subset \{\phi \leq 2L\}$. For the uniform statement, replace $C$ by a common sup-norm bound.

\vspace{2mm}
\emph{(2) Proof of the $C^{1,1}$-regularity.---}
We basically adapt the method of Bedford and Taylor on unit ball case \cite{MR445006}. See also \cite[Section 4.3]{klimek1991pluripotential}. It was adapted on $\mathbb{C}^n$ \cite{berman2009poly} and on projective manifolds \cite{berman2009bergman}. The key point of these arguments is using holomorphic vector fields that generate small translations on manifolds in every direction at each point. In our Stein manifold setting, Cartan's Theorem A provides such vector fields. 

\begin{lemma}\label{lemmatransl}
Let $K$ be a compact subset of $X$. Then there are global holomorphic vector fields $V_1,\cdots,V_m$, neighborhoods $K\Subset U_0 \Subset U_1 \Subset X$, a constant $\epsilon>0$, and a map
\begin{equation*}
    S : U_1 \times B_{\mathbb{C}^m}(0,\epsilon) \longrightarrow X
\end{equation*}
such that
\begin{enumerate}[label=\textup{(\roman*)}]
\item $S$ is jointly holomorphic; in particular, $S_\zeta(x):=S(x,\zeta)$ is holomorphic in $x$;
\item $S_0(x)=x$;
\item $D_\zeta S(x,0): \mathbb{C}^m \rightarrow T_xX$ is surjective for $x$ in a neighborhood of $\overline{U}_0$;
\item in any fixed finite coordinate cover of $\overline{U}_0$,
\begin{equation}\label{Slinear}
 S_\zeta(x)=x+A_x\zeta+O(|\zeta|^2), \quad A_x(e_j)=V_j(x),
\end{equation}
uniformly in $x$.
\end{enumerate}
Moreover, after shrinking the neighborhoods, every pair of sufficiently nearby points $x,y \in U_0$ admits $\zeta$ with $S_\zeta(x)=y$ and $|\zeta| \leq C \textrm{dist}(x,y)$.
\end{lemma}

Let $q=\phi+u$ and $p=P(q)$. By the previous result, there is a compact set $C$ such that $\operatorname{supp}(p-\phi)\cup \operatorname{supp}u\subset C$. Thus $p=q=\phi$ on $X\setminus C$. Choose a relatively compact neighborhood $D$ of $C$ and apply Lemma \ref{lemmatransl} with $K=\overline{D}$. Let $U=U_1$ in the Lemma, and choose a collar $W$ around $\partial D$ satisfying $\partial D \subset W, \;\overline{W}\Subset U\setminus C$. Possibly shrink $\epsilon$ in the Lemma so that for all $\zeta$ in the ball,
\begin{equation*}
    S_{\pm\zeta}(\overline{D})\Subset U,\qquad S_{\pm\zeta}(\overline{W})\Subset X\setminus C.
\end{equation*}

We first show that $p$ is locally Lipschitz. Since $q$ and $\phi$ are smooth on $\overline{U}$, by \eqref{Slinear},
\begin{equation}\label{qphi}
    q(S_{\pm\zeta}x)-C_1|\zeta| \leq q(x), \quad x\in D\quad\text{and}\quad \phi(S_{\pm\zeta}x)-C_1|\zeta| \leq \phi(x), \quad x\in W
\end{equation}
for some $C_1$. On $D$, define the psh function
\begin{equation*}
    r_\zeta(x)=p(S_\zeta x)-C_1|\zeta|.
\end{equation*}
By \eqref{qphi}, $r_\zeta \leq q$ on $D$. On $W\cap D$, both $x$ and $S_\zeta x$ lie in $X\setminus C$, where $p=q=\phi$. Thus \eqref{qphi} implies $r_\zeta \leq p$ on $W\cap D$. It follows that
\begin{equation*}
\widetilde{p}_\zeta=
\begin{cases}
\max\{p,r_\zeta\},&\text{on}\;D,\\
p,&\text{on}\;X\setminus D,
\end{cases}
\end{equation*}
is a psh function that lies below $q$; hence $\widetilde{p}_\zeta \leq p$. Thus we have $r_\zeta \leq p$ on $D$, which gives
\begin{equation*}
    p(S_\zeta x)-p(x)\leq C_1 |\zeta|.
\end{equation*}
For $x,y$ in a small coordinate chart in $D$, by the last assertion of Lemma \ref{lemmatransl}, we have $|\zeta|=O(\textrm{dist}(x,y))$ sending $x$ to $y$ and $y$ to $x$. Hence we get
\begin{equation}\label{Lipshitz}
    |p(x)-p(y)|\leq C_2 \textrm{dist}(x,y).
\end{equation}

We next prove semiconcavity of $p$. Since $q$ is smooth, by \eqref{Slinear},
\begin{equation}
    \frac{q(S_\zeta x)+q(S_{-\zeta}x)}{2}-C_3|\zeta|^2 \leq q(x),\quad x\in D,
\end{equation}
for some $C_3$. The analogous estimate holds for $\phi$ on $W$. On $D$, define the psh function
\begin{equation*}
    R_\zeta(x)=\frac{p(S_\zeta x)+p(S_{-\zeta}x)}{2}-C_3|\zeta|^2.
\end{equation*}
As in the preceding argument, this can be pasted with $p$ and gives
\begin{equation}\label{concave}
    p(S_\zeta x)+p(S_{-\zeta}x)-2p(x)\leq 2C_3 |\zeta|^2.
\end{equation}
Fix a coordinate ball around $x$. For small enough $h\in\mathbb{C}^n$, the last assertion of Lemma \ref{lemmatransl} gives $\zeta=\zeta(x,h)$ with
\begin{equation*}
    S_\zeta x=x+h, \quad |\zeta|=O(|h|).
\end{equation*}
By \eqref{Slinear}, $h=A_x\zeta+O(|\zeta|^2)$. This implies
\begin{equation*}
    S_{-\zeta}x=x-A_x\zeta+O(|\zeta|^2)=x-h+O(|\zeta|^2).
\end{equation*}
By the local Lipschitz estimate \eqref{Lipshitz}, we have $p(S_{-\zeta}x)=p(x-h)+O(|h|^2)$. The estimate \eqref{concave} now gives
\begin{equation}\label{differnce}
     p(x+h)+p(x-h)-2p(x)\leq C_4 |h|^2.
\end{equation}

By a standard change of variable argument, for any real unit vector $e$, from \eqref{differnce} we get $C_4d\lambda_{\mathbb{R}^{2n}}-D^2_{e,e}p \geq 0$ in the sense of distributions. Thus $D^2_{e,e}p$ is a signed Radon measure. On the other hand, plurisubharmonicity gives
\begin{equation*}
    D^2_{e,e}p+D^2_{Je,Je}p \geq 0
\end{equation*}
in the sense of distributions. Applying the upper bound to $Je$, we obtain $D^2_{e,e}p \geq -C_4d\lambda_{\mathbb{R}^{2n}}$. By the Radon--Nikodym theorem applied to $2C_4d\lambda_{\mathbb{R}^{2n}}\geq C_4d\lambda_{\mathbb{R}^{2n}}-D^2_{e,e}p \geq 0$, we conclude that $D^2_{e,e}p$ is represented by an $L^\infty_{\mathrm{loc}}$ function. Then by polarization, every entry of the distributional Hessian is an $L^\infty_{\mathrm{loc}}$ function. That is, $p \in W^{2,\infty}_{\mathrm{loc}}=C^{1,1}_{\mathrm{loc}}$. The uniform statement also follows.
\end{proof}

\begin{proof}[Proof of the Lemma \ref{lemmatransl}]
By Cartan's Theorem A, $TX$ is globally generated. Given a compact set $K$, choose finitely many vector fields $V_1,\cdots,V_m$ spanning $TX$ on a neighborhood of $K$. Choose nested neighborhoods $K\Subset U_0 \Subset U_1 \Subset U_2$. Denote by $\Phi_j^w$ the complex flows of $V_j$ defined on $U_2$, for uniformly small $|w|<\epsilon$. Choose $\epsilon$ such that every possible composition of flows starting from $U_1$ remains in $U_2$. Define
\begin{equation*}
    S=\Phi_m^{\zeta_m}\circ \cdots \circ \Phi_1^{\zeta_1}.
\end{equation*}
It is easy to show that $S$ satisfies (i)--(iv). The last assertion follows from the inverse function theorem.
\end{proof}

Let $q=\phi+u$, $p=P(q)$, and contact set $D=\{p=q\}$. By the classical result of Bedford and Taylor \cite{MR445006,BT82}, a locally bounded psh envelope is maximal on the open noncontact set. Here we recall the argument in the present setting for the sake of completeness. Let $G \Subset \{p<q\}$ and let $v \in PSH(G)$ satisfy $\limsup_{z\rightarrow\partial G}v(z)\leq p(z)$. By continuity, $\delta:=\min_{\overline{G}}(q-p)>0$, and the maximum principle implies that $M:=\sup_G(v-p)_+$ is finite. If $M>0$, choose $0<\varepsilon < \min\{1,\delta/M\}$. Then $(1-\varepsilon)p+\varepsilon v\leq q$ on $G$. Taking the maximum of this function with $p$ on $G$ and extending by $p$ outside $G$, we obtain a global psh competitor below $q$. The definition of the envelope gives $(1-\varepsilon)p+\varepsilon v\leq p$; hence $v\leq p$. The case $M=0$ is immediate. Thus $p$ is maximal on $\{p<q\}$, and therefore
\begin{equation*}
    \mathbf{1}_{\{p<q\}}(dd^c p)^n=0.
\end{equation*}
Denote the Monge--Amp\`ere measure of a locally bounded psh function by 
\begin{equation*}
    \operatorname{MA}(a):=\frac{1}{n!}(dd^c a)^n=\frac{1}{(2\pi)^n n!}(i\partial\bar\partial a)^n.
\end{equation*}
Thus we obtain the \textit{orthogonality relation} (cf. \cite[proof of Lemma 4.11]{berman2010growth})
\begin{equation}\label{orthogonality}
    \int_X (q-p)\operatorname{MA}(p)=0.
\end{equation}
Since $p \in C^{1,1}_{\mathrm{loc}}$, its Monge--Amp\`ere measure is absolutely continuous w.r.t. Lebesgue measure in coordinates.

Next we prove another main result of this section, the differentiability of the equilibrium energy. For $v \in C^{1,1}_c(X)$ such that $a=\phi+v$ is psh, define the Monge--Amp\`ere energy by
\begin{equation*}
    \mathcal{E}_\phi(a)=\frac{1}{(n+1)!}\sum_{j=0}^n \int_X v(dd^c a)^j\wedge (dd^c \phi)^{n-j}.
\end{equation*}
The following lemma records some well-known properties of the MA energy functional in our setting (cf. \cite[Section 4]{berman2010growth}). Since the integrands have compact support, the integrations by parts below are well defined and have no boundary terms.  
\begin{lemma}\label{aaaaaaa}
Let $a=\phi+v$ and $b=\phi+w$ be $C^{1,1}_{\mathrm{loc}}$ psh functions with $v,w \in C^{1,1}_c(X)$. Then:
\begin{enumerate}[label=\textup{(\roman*)}]
\item for $a_t=b+t(a-b)$, $0\leq t\leq 1$, one has
\begin{equation}\label{energyderivative}
   \frac{d}{dt}\mathcal{E}_\phi(a_t)=\int_X (a-b)\operatorname{MA}(a_t),\qquad 0<t<1,
\end{equation}
with the corresponding one-sided derivatives at $t=0$ and $t=1$;
\item one has the inequalities
\begin{equation}\label{energyinequal}
   \int_X (a-b)\operatorname{MA}(a) \leq \mathcal{E}_\phi(a)-\mathcal{E}_\phi(b) \leq\int_X (a-b)\operatorname{MA}(b).
\end{equation}
\end{enumerate}
\end{lemma}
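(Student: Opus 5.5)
We follow the standard compact-case computation of \cite[Section 4]{berman2010growth}. Compact support of $v$ and $w$ is what makes every integration by parts legitimate. Put $f:=a-b=v-w\in C^{1,1}_c(X)$. For $0\le t\le1$, the function $a_t-\phi=w+tf$ lies in $C^{1,1}_c(X)$, and $a_t$ is psh as a convex combination of psh functions. Thus $\mathcal{E}_\phi(a_t)$ is defined. Since all potentials are $C^{1,1}_{\mathrm{loc}}$, the currents $(dd^c a_t)^j\wedge(dd^c\phi)^{n-j}$ have $L^\infty_{\mathrm{loc}}$ coefficients. Their coefficients are polynomial in $t$, because $dd^c a_t=dd^c b+t\,dd^c f$ and $dd^c f$ has $L^\infty$ coefficients. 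So $\mathcal{E}_\phi(a_t)$ is a polynomial in $t$, and (i) reduces to identifying its derivative. Also, $\frac{d}{dt}\operatorname{MA}(a_t)=\frac{1}{(n-1)!}dd^c f\wedge(dd^c a_t)^{n-1}$.

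\textbf{Step 1: a cocycle identity.} We first prove
\[
\mathcal{E}_\phi(a)-\mathcal{E}_\phi(b)=\frac{1}{(n+1)!}\sum_{j=0}^n\int_X (a-b)(dd^c a)^j\wedge(dd^c b)^{n-j}
\]
for all $a,b$ as in the statement. The standard route is to differentiate both sides along $a_t$, or to compare the two telescoping sums directly. The basic move is the symmetry
\[
\int_X g\, dd^c h\wedge T=\int_X h\, dd^c g\wedge T,
\]
valid when $g$ or $h$ is compactly supported. This is Stokes' theorem, with $T$ a wedge of $dd^c$ of $C^{1,1}_{\mathrm{loc}}$ psh functions. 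For $C^{1,1}$ potentials, $dd^c$ of such functions has $L^\infty$ coefficients and $d$ of them has Lipschitz coefficients, so Stokes applies. Alternatively, regularize by convolution in charts with a partition of unity and pass to the limit, using Bedford--Taylor continuity.

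\textbf{Step 2: the derivative (i).} Differentiating the definition termwise, with $\dot a_t=f$, gives
\[
\frac{1}{(n+1)!}\sum_{j}\Bigl[\int f(dd^c a_t)^j\wedge(dd^c\phi)^{n-j}+j\int (a_t-\phi)\,dd^c f\wedge(dd^c a_t)^{j-1}\wedge(dd^c\phi)^{n-j}\Bigr].
\]
In the second term we move $dd^c$ from $f$ onto $a_t-\phi$ using the symmetry above. Then we write $dd^c(a_t-\phi)=dd^c a_t-dd^c\phi$ and telescope in $j$. The sum collapses to $\frac{(n+1)}{(n+1)!}\int f\,(dd^c a_t)^n=\int f\operatorname{MA}(a_t)$. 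This is exactly the compact-case computation; the only bookkeeping is checking that the coefficients telescope to $n+1$ copies of the top term. The one-sided derivatives at the endpoints follow because $\mathcal{E}_\phi(a_t)$ is polynomial in $t$.

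\textbf{Step 3: the inequalities (ii).} Set $g(t):=\mathcal{E}_\phi(a_t)$. By (i), $g'(t)=\int f\operatorname{MA}(a_t)$, and hence
\[
g''(t)=\frac{1}{(n-1)!}\int f\,dd^c f\wedge(dd^c a_t)^{n-1}=-\frac{1}{(n-1)!}\int df\wedge d^c f\wedge(dd^c a_t)^{n-1}\le0.
\]
The integration by parts again uses compact support of $f$. The sign uses that $(dd^c a_t)^{n-1}$ is a positive current with $L^\infty$ coefficients and $df\wedge d^c f\ge0$. So $g$ is concave on $[0,1]$, which gives $g'(1)\le g(1)-g(0)\le g'(0)$. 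Since $g'(1)=\int (a-b)\operatorname{MA}(a)$ and $g'(0)=\int(a-b)\operatorname{MA}(b)$, this is \eqref{energyinequal}.

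\textbf{Main obstacle.} The only delicate point is justifying Stokes and the integrations by parts at $C^{1,1}$ regularity, where the forms involve products of second derivatives. We handle it by noting that $d^c f\wedge(dd^c a_t)^{n-1}$ has $W^{1,\infty}$ coefficients with compact support, so Stokes holds for Lipschitz forms. Alternatively, we smooth $a_t$ locally and pass to the limit using weak continuity of Monge--Amp\`ere operators under decreasing or uniform convergence.
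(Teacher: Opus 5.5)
Your proposal is correct and follows the paper's route. You compute $\frac{d}{dt}\mathcal{E}_\phi(a_t)$ by direct differentiation and telescoping, show that $t\mapsto\int_X(a-b)\operatorname{MA}(a_t)$ is nonincreasing via $-\frac{1}{(n-1)!}\int_X d(a-b)\wedge d^c(a-b)\wedge(dd^c a_t)^{n-1}\le 0$, and get (ii) from concavity. The paper writes this last step as $\mathcal{E}_\phi(a)-\mathcal{E}_\phi(b)=\int_0^1\int_X(a-b)\operatorname{MA}(a_t)\,dt$ with a decreasing integrand, and your explicit telescoping and polynomiality in $t$ add useful detail.

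One small correction: for $n\ge2$ the form $d^c f\wedge(dd^c a_t)^{n-1}$ has only $L^\infty$ coefficients, not $W^{1,\infty}$. So justify the integrations by parts through your regularization alternative instead. Local smoothing converging in $W^{2,p}_{\mathrm{loc}}$ for all $p<\infty$ makes every product of second derivatives converge in $L^1_{\mathrm{loc}}$ on the compact support. The paper leaves this point implicit.
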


\begin{proof}
Note that $a-b=v-w$ is compactly supported. It is easy to verify by direct computation that
\begin{equation*}
    \mathcal{E}_\phi(a)-\mathcal{E}_\phi(b)=\frac{1}{(n+1)!}\sum_{j=0}^n \int_X (a-b)(dd^c a)^j\wedge (dd^c b)^{n-j}.
\end{equation*}
Alternatively, note that $\mathcal{E}_\phi$ is essentially the Aubin--Yau functional and recall the cocycle property. The derivative expression \eqref{energyderivative} also can be proved by direct computation (see, for example, \cite[Proposition 9]{eum2025limiting}). Next we show that the derivative in \eqref{energyderivative} is decreasing in $t$. For $0<t<1$,
\begin{align*}
    \frac{d}{dt}\int_X (a-b)\operatorname{MA}(a_t)&=\frac{1}{(n-1)!}\int_X (a-b)dd^c(a-b)\wedge (dd^c a_t)^{n-1}\\
    &=-\frac{1}{(n-1)!}\int_X d(a-b)\wedge d^c (a-b)\wedge (dd^c a_t)^{n-1} \leq 0.
\end{align*}
Since $\mathcal{E}_\phi(a)-\mathcal{E}_\phi(b)=\int_0^1 dt\int_X (a-b)\operatorname{MA}(a_t)$, we get \eqref{energyinequal}.
\end{proof}

Now we prove the analog of Theorem B in \cite{berman2010growth}.
\begin{theorem}\label{diffofequil}
For $u\in C^\infty_c(X)$, $u\mapsto \mathcal{E}_\phi(P(\phi+u))$ is Gateaux differentiable, with derivative given by
\begin{equation}\label{Gderivative}
   \left.\frac{d}{dt}\right|_{t=0}\mathcal{E}_\phi(P(\phi+u+th))=\int_X h \operatorname{MA}(P(\phi+u)),\quad h\in C^\infty_c(X).
\end{equation}
The function $u\mapsto \mathcal{E}_\phi(P(\phi+u))$ is also finite and concave.
\end{theorem}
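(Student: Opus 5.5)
We follow the strategy of Berman--Boucksom, Theorem B: combine the energy inequalities of Lemma \ref{aaaaaaa}(ii) with the orthogonality relation \eqref{orthogonality}. Throughout, $u,h\in C^\infty_c(X)$ are fixed and we write $p_t:=P(\phi+u+th)$ for $t$ in a small interval. By Theorem \ref{plurimain}, each $p_t$ is psh and $C^{1,1}_{\mathrm{loc}}$. Moreover, $p_t-\phi\in C^{1,1}_c(X)$ with support and $C^{1,1}$ norm uniform in $|t|\le 1$, since $u+th$ ranges over a bounded subset of the two-dimensional span of $u,h$. Hence $\mathcal E_\phi(p_t)$ is a finite integral of a compactly supported bounded integrand, and Lemma \ref{aaaaaaa} applies to any pair $p_s,p_t$.

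\textbf{Step 1: two-sided bounds.} Apply \eqref{energyinequal} with $a=p_t$ and $b=p_0$:
\begin{equation*}
\int_X (p_t-p_0)\operatorname{MA}(p_t)\le \mathcal E_\phi(p_t)-\mathcal E_\phi(p_0)\le \int_X (p_t-p_0)\operatorname{MA}(p_0).
\end{equation*}
To bound the right-hand side, note $p_t\le \phi+u+th$, so $p_t-p_0\le (\phi+u+th)-p_0$. Since $p_0=\phi+u$ on the support of $\operatorname{MA}(p_0)$, by \eqref{orthogonality}, we get $\int_X (p_t-p_0)\operatorname{MA}(p_0)\le t\int_X h\operatorname{MA}(p_0)$. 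Symmetrically, $p_0\le \phi+u$ and $p_t=\phi+u+th$ on the support of $\operatorname{MA}(p_t)$, which gives $\int_X (p_t-p_0)\operatorname{MA}(p_t)\ge t\int_X h\operatorname{MA}(p_t)$. Together,
\begin{equation*}
t\int_X h\operatorname{MA}(p_t)\le \mathcal E_\phi(p_t)-\mathcal E_\phi(p_0)\le t\int_X h\operatorname{MA}(p_0).
\end{equation*}
Here we must take care that the orthogonality relation holds for measures that are absolutely continuous, which is true since $p_t\in C^{1,1}$.

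\textbf{Step 2: continuity of the Monge--Amp\`ere measures.} Dividing by $t$ (separately for $t>0$ and $t<0$), it remains to show $\int h\operatorname{MA}(p_t)\to\int h\operatorname{MA}(p_0)$ as $t\to0$. Envelopes are $1$-Lipschitz in sup norm, so $\|p_t-p_0\|_\infty\le |t|\|h\|_\infty$ and $p_t\to p_0$ uniformly. Uniform convergence of locally bounded psh functions implies weak convergence of the Monge--Amp\`ere measures (Bedford--Taylor). Pairing against the compactly supported continuous $h$ then yields \eqref{Gderivative}. Alternatively, the uniform $C^{1,1}$ bounds give weak-$*$ convergence of the Hessians in $L^\infty$, but the determinant is nonlinear, so the Bedford--Taylor route is the cleaner one. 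This continuity step is the main technical point; the rest is formal.

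\textbf{Step 3: concavity and finiteness.} Finiteness was noted above. For concavity, set $F(u):=\mathcal E_\phi(P(\phi+u))$. Step 1 applied with base point $u_0$ and arbitrary $u_1$, using $h=u_1-u_0$ and $t=1$ (the argument is valid for any pair), gives
\begin{equation*}
F(u_1)\le F(u_0)+\int_X (u_1-u_0)\operatorname{MA}(P(\phi+u_0)).
\end{equation*}
Thus $F$ lies below each of its tangent hyperplanes. Taking $u_0$ to be a convex combination of $u_1,u_2$ and averaging the two inequalities gives concavity. Alternatively, $P(\phi+(1-s)u_1+su_2)\ge (1-s)P(\phi+u_1)+sP(\phi+u_2)$, combined with monotonicity and concavity of $\mathcal E_\phi$ along affine paths (Lemma \ref{aaaaaaa}), also gives concavity.
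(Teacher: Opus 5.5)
Your proposal is correct and follows the paper's proof essentially step for step. Both arguments use the two-sided bound from \eqref{energyinequal} together with the orthogonality relation, then the sup-norm contraction of the envelope and Bedford--Taylor continuity of $\operatorname{MA}$ to let $t\to0$, and finally concavity from the supergradient inequality $\mathcal{E}_\phi(P(\phi+u_1))-\mathcal{E}_\phi(P(\phi+u_0))\le\int_X(u_1-u_0)\operatorname{MA}(P(\phi+u_0))$. Your alternative concavity argument, via $P(\phi+(1-s)u_1+su_2)\ge(1-s)P(\phi+u_1)+sP(\phi+u_2)$ and the monotonicity and concavity of $\mathcal{E}_\phi$ along affine paths, is also valid but not needed.
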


\begin{proof}
Let $q=\phi+u$, $p=P(\phi+u)$, and $p_t=P(q+th)$. We first use the definition of the envelope to show that the envelope operator is a contraction in the sup norm:
\begin{equation}\label{envelopecontraction}
   \|p_t-p\|_\infty \leq |t|\|h\|_\infty.
\end{equation}
Indeed, if $M=\|h\|_\infty$, then $p_t-|t|M \leq q$ and $p-|t|M\leq q+th$. The functions on the left are competitors for the corresponding envelopes. By Theorem \ref{plurimain}, there is a common compact support for $p_t-\phi$ when $|t|\leq1$.

By \eqref{envelopecontraction}, the potentials converge uniformly and are locally bounded as $t\rightarrow 0$. By the continuity property of the MA operator,
\begin{equation}\label{MAcontinuity}
   \operatorname{MA}(p_t) \longrightarrow \operatorname{MA}(p) \qquad \text{locally weakly on} \,X.
\end{equation}
Thus the integrals against every function in $C_c(X)$ converge.

To compute the derivative for $t>0$, apply \eqref{energyinequal} to $p_t$ and $p$. By the argument for the orthogonality relation,
\begin{equation*}
   p_t=q+th \quad \operatorname{MA}(p_t)\text{-almost everywhere}, \quad p=q \quad \operatorname{MA}(p)\text{-almost everywhere}.
\end{equation*}
Since $p \leq q$ and $p_t \leq q+th$, it follows that
\begin{equation*}
   p_t-p \geq th \quad \operatorname{MA}(p_t)\text{-almost everywhere}, \quad p_t-p\leq th \quad \operatorname{MA}(p)\text{-almost everywhere}.
\end{equation*}
Consequently,
\begin{equation}\label{diff1}
   \int_X h\operatorname{MA}(p_t) \leq \frac{\mathcal{E}_\phi(p_t)-\mathcal{E}_\phi(p)}{t} \leq \int_X h\operatorname{MA}(p).
\end{equation}
Letting $t\downarrow 0$ and using \eqref{MAcontinuity}, we obtain the right derivative. For $t<0$, division by $t$ reverses the inequalities and gives
\begin{equation*}
   \int_X h\operatorname{MA}(p) \leq\frac{\mathcal E_\phi(p_t)-\mathcal E_\phi(p)}{t} \leq \int_X h\operatorname{MA}(p_t).
\end{equation*}
The left derivative follows by letting $t\uparrow 0$.

We next prove concavity. For arbitrary $u_0,u_1$, let $p_{u_i}=P(\phi+u_i)$. The right inequality in \eqref{energyinequal}, together with the orthogonality relation, gives
\begin{equation*}
   \mathcal{E}_\phi(p_{u_1})-\mathcal{E}_\phi(p_{u_0})\leq\int_X (p_{u_1}-p_{u_0})\operatorname{MA}(p_{u_0})\leq\int_X (u_1-u_0)\operatorname{MA}(p_{u_0}),
\end{equation*}
since $p_{u_0}=\phi+u_0$ almost everywhere for $\operatorname{MA}(p_{u_0})$ and $p_{u_1}\leq\phi+u_1$. Thus $\operatorname{MA}(p_{u_0})$ is a supergradient and $\mathcal{E}_\phi(P\cdot)$ is concave.
\end{proof}

\subsection{Bergman kernel asymptotics}

In this section we prove that the Bergman measure $k^{-n}\beta_{ka}$ \eqref{Bergmanmeasure} converges to the Monge--Amp\`ere measure $\operatorname{MA}(a)$ of the psh weight $a$ as $k\rightarrow \infty$. Note that in our setting of $C^{1,1}$-regular psh weight $a$, we have
\begin{equation*}
    \operatorname{MA}(a)=c_n\pi^{-n}\det(\partial_j\bar\partial_k a)dZ\wedge d\bar{Z} \quad \text{almost everywhere}.
\end{equation*}
Below we closely follow \cite{berman2009poly,berman2009bergman}. 

\begin{proposition}[Local Morse inequalities]\label{localMorse}
Let $\mathcal{A}$ be a family of psh weights of the form $a=\phi+v, v\in C_c^{1,1}(X)$ with uniformly bounded $C^{1,1}$-norms of $v$. For every compact subset $K$ of $X$, there is a constant $C_K$ such that 
\begin{equation}\label{uniformupper}
    \mathbf{1}_K\beta_{ka}\leq C_K k^n\, dV_K, \qquad a\in \mathcal{A},\; k\geq 1
\end{equation}
for any fixed smooth density $dV_K$. Moreover, at every point $x$ where $a$ has second derivatives, we have
\begin{equation}\label{pointwiseupper}
    \limsup_{k\rightarrow\infty} k^{-n}B_{ka}(x) \leq \pi^{-n}\det(\partial_j\bar\partial_k a(x)).
\end{equation}
\end{proposition}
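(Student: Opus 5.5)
The plan is to prove both estimates with the extremal characterization \eqref{Bk} and the submean value inequality on small coordinate balls, as in \cite{berman2009poly,berman2009bergman}.

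Fix a finite cover of $K$ by coordinate charts. Let $x\in K$, and let $s=f\,dZ\in\mathcal H(ka)$ with $\|s\|_{ka}=1$. The weight $a$ is not pluriharmonic, so we first remove its holomorphic part. Write $a(z)=a(x)+2\operatorname{Re}Q_x(z-x)+R_x(z-x)$, where $Q_x$ is the holomorphic polynomial of degree $\le 2$ given by the Taylor data of $a$ at $x$. For $C^{1,1}$ weights we only use the degree-one part together with the $C^{1,1}$ bound.

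With this normalization, $|R_x(w)|\le C|w|^2$, and $C$ is uniform over $a\in\mathcal A$ and $x\in K$, because the $C^{1,1}$ norms of $v$ are bounded and $\phi$ is fixed. Put $g=f e^{-kQ_x(\cdot-x)}$, which is holomorphic. Then $|f|^2e^{-ka}=|g|^2e^{-ka(x)}e^{-kR_x}$.

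\textbf{Uniform bound \eqref{uniformupper}.} Apply the submean value inequality to $|g|^2$ on the ball of radius $k^{-1/2}$ around $x$. On that ball $kR_x$ is bounded by $C$, so
\begin{equation*}
|f(x)|^2e^{-ka(x)}\le \frac{e^{C}}{\operatorname{vol}B(x,k^{-1/2})}\int_{B(x,k^{-1/2})}|f|^2e^{-ka}\,d\lambda\le C' k^n\|s\|^2_{ka}.
\end{equation*}
Taking the supremum over $s$ in \eqref{Bk} gives $B_{ka}(x)\le C'k^n$. Since $K$ is covered by finitely many charts, this is \eqref{uniformupper}.

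\textbf{Pointwise bound \eqref{pointwiseupper}.} Fix a point $x$ where $a$ is twice differentiable. Then $a$ has a genuine second-order Taylor expansion there: $Q_x$ is the full holomorphic quadratic part, and $R_x(w)=H_x(w,\bar w)+o(|w|^2)$, where $H_x$ is the Levi form $\sum\partial_j\bar\partial_k a(x)w_j\bar w_k$. If $\det H_x=0$, the required bound is $0$; we handle this case by replacing $H_x$ with $H_x+\epsilon|w|^2$ and letting $\epsilon\to0$ at the end.

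Take the ball of radius $\log k/\sqrt k$. After the change of variables $w=\sqrt k\,\zeta$, we have $kR_x=H_x(\zeta,\bar\zeta)+o(1)$ uniformly for $|\zeta|\le\log k$. We now use the sharp weighted submean inequality: for an entire holomorphic $G$ and positive definite $H$,
\begin{equation*}
|G(0)|^2\le \frac{\int_{|\zeta|<\rho}|G|^2e^{-H}}{\int_{|\zeta|<\rho}e^{-H}}.
\end{equation*}
This holds by averaging over circles, since $e^{-H}$ is circle invariant in each complex line after diagonalizing $H$ with a unitary change of coordinates.

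Apply it to $G(\zeta)=g(x+\zeta/\sqrt k)$. The left-hand denominator tends to $\int_{\mathbb C^n}e^{-H_x}\,d\lambda=\pi^n/\det H_x$. This yields
\begin{equation*}
|f(x)|^2e^{-ka(x)}\le (1+o(1))\,k^n\pi^{-n}\det H_x\int|f|^2e^{-ka}\,d\lambda.
\end{equation*}
Taking the supremum over $s$ in \eqref{Bk} gives \eqref{pointwiseupper}.

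\textbf{Main obstacle.} The delicate step is controlling the error $o(|w|^2)$ at merely twice-differentiable points. The plan handles it by choosing the shrinking radius $\log k/\sqrt k$: it is large enough that the Gaussian integral captures essentially all of its mass, and small enough that $k\cdot o(|w|^2)=o(1)$ on the ball.
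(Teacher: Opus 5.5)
Your proof of \eqref{uniformupper} is the paper's argument and is fine. Your sharp weighted submean inequality (averaging over tori after a unitary diagonalization) is also the one the paper uses. The gap is in \eqref{pointwiseupper}, exactly at the step you call the main obstacle.

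At a point where $a$ is merely twice differentiable, the remainder is $R_x(w)-H_x(w,\bar w)=\theta(|w|)\,|w|^2$ with $\theta(r)\to0$ as $r\to0$, but there is no rate. The $C^{1,1}$ bound only gives $\theta=O(1)$, and even for $C^2$ weights $\theta$ can decay like $1/\log\log(1/r)$. After the rescaling $w=\zeta/\sqrt k$, the error on $|\zeta|\le\log k$ is controlled only by $(\log k)^2\sup_{r\le\log k/\sqrt k}|\theta(r)|$. For such $\theta$ this is of order $(\log k)^2/\log\log k\to\infty$. So the claim that $kR_x=H_x+o(1)$ uniformly on $|\zeta|\le\log k$ is false in general. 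Consequently you cannot bound $e^{-H_x}$ by $(1+o(1))e^{-kR_x}$ pointwise on that ball, and that pointwise comparison is what the step needs, because $|G|^2$ is arbitrary. No radius $\rho_k/\sqrt k$ with $\rho_k\to\infty$ fixed in advance, independently of $\theta$, is safe.

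The repair is to separate the two limits. Any of the following works:
\begin{enumerate}
\item \emph{The paper's route.} Fix $R$ and work on $|\zeta|\le R$, where the error is at most $R^2\sup_{r\le R/\sqrt k}|\theta(r)|\to0$. This gives $\limsup_{k}k^{-n}B_{ka}(x)\le\bigl(\int_{|\zeta|\le R}e^{-H_x}\,d\lambda\bigr)^{-1}$, and only afterwards let $R\to\infty$. This also settles $\det H_x=0$ without any perturbation, since the denominator then diverges.
\item \emph{Slow radius.} Choose $\rho_k\to\infty$, depending on $\theta$, slowly enough that $\rho_k^2\sup_{r\le\rho_k/\sqrt k}|\theta(r)|\to0$.
\item \emph{Absorb the remainder.} Use your $\epsilon$-perturbation on the remainder, not only on the degeneracy. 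If $|\theta(r)|\le\delta$ for $r<r_0$, then $kR_x(\zeta/\sqrt k)\le H_x(\zeta,\bar\zeta)+\delta|\zeta|^2$ on $|\zeta|<r_0\sqrt k$. The sharp inequality with weight $e^{-H_x-\delta|\zeta|^2}$ gives the bound $\pi^{-n}\det(H_x+\delta I)$ as $k\to\infty$, and then let $\delta\to0$.
\end{enumerate}
With any of these changes your argument becomes essentially the paper's.
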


\begin{proof}
Recall the definition and extremal characterization of $B_{ka}$ \eqref{Bk}. Working on a fixed coordinate chart around $x$, the proof of \cite[Lemma 3.1]{berman2009poly} goes without change. Let us reproduce the main argument of the proof. Fix a chart around $x=0$ and consider the first order Taylor expansion
\begin{equation*}
    a(z)=a(0)+2\operatorname{Re}H_x(z)+O(|z|^2),
\end{equation*}
where $H_x(z)=\partial_z a(0)\cdot z$ is a holomorphic function. Then for $f_s dZ=s\in \mathcal{H}(ka)$,
\begin{equation*}
    |f_s(z)|^2e^{-ka(z)}=|f_s(z)e^{-kH_x(z)}|^2e^{-ka(0)+O(k|z|^2)}.
\end{equation*}
Apply the submean value property to the local holomorphic function $f_s(z)e^{-kH_x(z)}$ on a ball of radius $\propto k^{-1/2}$ and we get \eqref{uniformupper}.

For the pointwise upper bound, assume $a$ has second derivatives at $x$ and consider the second order Taylor expansion
\begin{equation*}
    a(z)=a(0)+2\operatorname{Re}H(z)+\sum_{j=1}^n \lambda_j|z^j|^2+o(|z|^2).
\end{equation*}
Here $H(z)$ is a holomorphic function that also absorbs the holomorphic quadratic part of the Hessian of $a$, and we perform a unitary coordinate transform to diagonalize the complex Hessian of $a$. Set
\begin{equation*}
    F_{k,s}(z)=f_s(z/\sqrt k)e^{-kH(z/\sqrt k)},\quad Q(z)=a(z)-a(0)-2\operatorname{Re}H(z).
\end{equation*}
Fix $R>0$ and consider the ball of radius $R/\sqrt{k}$ (included in the fixed coordinate chart for large enough $k$). Then for fixed $R$, 
\begin{equation}\label{rho}
    \rho_{k,R}:=\exp\left(\sup_{|z|\leq R}\left|kQ\left(\frac{z}{\sqrt{k}}\right)-\sum_j \lambda_j|z^j|^2\right|\right) \longrightarrow 1
\end{equation}
as $k\rightarrow \infty$. Note that we have
\begin{align}
    k^{-n}\frac{|f_s(0)|^2e^{-ka(0)}}{\|s\|^2_{ka}}&\leq k^{-n}\frac{|f_s(0)|^2e^{-ka(0)}}{\int_{|z|\leq R/\sqrt{k}}|f_s|^2 e^{-ka}d\lambda_{\mathbb{R}^{2n}}}\nonumber\\
    &= \frac{|f_s(0)|^2e^{-ka(0)}}{\int_{|z|\leq R}|f_s(\cdot/\sqrt{k})|^2 e^{-ka(\cdot/\sqrt{k})}d\lambda}\leq \rho_{k,R}\frac{| F_{k,s}(0)|^2}{\int_{|z|\leq R}| F_{k,s}|^2 e^{-\sum \lambda_j|z^j|^2}d\lambda}.\label{berman}
\end{align}
Now we provide an upper bound for the last fraction term in \eqref{berman}. Let $r_1,\cdots,r_n>0$ so that $r_1^2+\cdots+r_n^2<R^2$. By the mean value property of holomorphic functions, we have
\begin{equation*}
    |F_{k,s}(0)|^2=\left| \frac{1}{(2\pi)^n}\int_{[0,2\pi]^n} F_{k,s}(r_1e^{i\theta_1},\cdots,r_ne^{i\theta_n})d\theta \right|^2\leq \frac{1}{(2\pi)^n}\int_{[0,2\pi]^n}\left| F_{k,s}(r_1e^{i\theta_1},\cdots,r_ne^{i\theta_n})\right|^2 d\theta.
\end{equation*}
Multiplying it by $e^{-\sum \lambda_j r_j^2}$ and integrating it with respect to $r_jdr_j$ over all $r_1^2+\cdots+r_n^2<R^2$, we get
\begin{equation*}
    |F_{k,s}(0)|^2\int_{|z|\leq R} e^{-\sum \lambda_j |z^j|^2} d\lambda \leq \int_{|z|\leq R} \left| F_{k,s}\right|^2 e^{-\sum \lambda_j |z^j|^2} d\lambda.
\end{equation*}
This gives for fixed $R$,
\begin{equation*}
    \limsup_{k\rightarrow\infty}k^{-n}B_{ka}(0)\leq \frac{1}{\int_{|z|\leq R} e^{-\sum \lambda_j |z^j|^2} d\lambda}.
\end{equation*}
Letting $R$ tend to infinity proves \eqref{pointwiseupper}.
\end{proof}

Next we prove the lower bound of the Bergman kernel function $B_{ka}$, proving the convergence of the Bergman measure to the Monge--Amp\`ere measure. Note that we do not prove the convergence towards the \textit{equilibrium measure} for non-psh weights, since we will not need them in our proof of the LDP (unlike that of \cite{berman2014determinantal}); see the proof of Theorem \ref{wow}. However, it is clear that such an extension is possible in the current setting.

\begin{lemma}\label{psix}
    For every $x\in X$, there exists a locally integrable upper semicontinuous function $\psi_x \leq 0$, smooth on $X\setminus \{x\}$ and constant outside a coordinate ball, such that
    \begin{equation*}
        \psi_x(z)=(n+1)\log |z-x|^2+O(1),\qquad i\partial\bar\partial\psi_x \geq -C_x \omega_\phi
    \end{equation*}
    for some constant $C_x$.
\end{lemma}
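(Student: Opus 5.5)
The function $\psi_x$ will be built by cutting off a logarithmic pole in a single coordinate chart. Fix $x\in X$ and a holomorphic coordinate chart $z$ centered at $x$, defined on a neighborhood of the closed coordinate ball $\{|z|\le 2r\}$ for some $r>0$. Choose a smooth cutoff $\chi:X\to[0,1]$ with $\chi=1$ on $\{|z|\le r\}$ and $\operatorname{supp}\chi\subset\{|z|<2r\}$. I would define
\begin{equation*}
    \psi_x(z)=(n+1)\,\chi(z)\log\frac{|z|^2}{4r^2}
\end{equation*}
inside the chart, and $\psi_x=0$ outside it. Since $|z|<2r$ on $\operatorname{supp}\chi$, the logarithm is negative there, so $\psi_x\le 0$ everywhere. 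The function is smooth away from $x$, and it is constant (equal to $0$) outside the coordinate ball $\{|z|<2r\}$. Near $x$ we have $\chi=1$, so $\psi_x=(n+1)\log|z|^2+O(1)$. Setting $\psi_x(x)=-\infty$ makes it upper semicontinuous. Local integrability holds because $\log|z|$ is locally integrable in $\mathbb{R}^{2n}$.

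The only point that needs care is the lower bound on the Hessian. I would split $X$ into three regions.
\begin{enumerate}[label=\textup{(\alph*)}]
\item On $\{|z|<r\}$, $\psi_x=(n+1)\log|z|^2-(n+1)\log 4r^2$. This is plurisubharmonic, so $i\partial\bar\partial\psi_x\ge 0$ as a current, including across the pole.
\item On the annulus $\{r\le|z|\le 2r\}$, $\psi_x$ is smooth on a compact set, so its complex Hessian is bounded there with respect to the Euclidean coordinate metric.
\item Outside the chart, $\psi_x=0$, and there is nothing to check.
\end{enumerate}

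To turn the bound in (b) into the stated estimate, I would use that $\omega_\phi=i\partial\bar\partial\phi$ is a smooth Kähler form, by the strict plurisubharmonicity in Condition $\mathfrak{A}$. On the compact set $\{|z|\le 2r\}$ it is therefore comparable to the Euclidean form $i\partial\bar\partial|z|^2$. Hence the bounded Hessian from (b) satisfies $i\partial\bar\partial\psi_x\ge -C_x\omega_\phi$ there for some constant $C_x$.

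Combining the three regions gives $i\partial\bar\partial\psi_x\ge -C_x\omega_\phi$ globally, in the sense of currents. This is also the form needed to treat $\psi_x$ as locally psh plus smooth, so that it can serve as a singular weight in Theorem \ref{demailly}. No step is a real obstacle; the only subtlety is to check the inequality in the sense of currents across the pole. This is immediate because near $x$ the function $\psi_x$ is exactly plurisubharmonic.
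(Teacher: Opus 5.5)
Your proposal is correct and matches the paper's proof: cut off $(n+1)\log|z-x|^2$ with a bump $\chi$ supported in a coordinate ball and extend by zero. The Hessian bound then comes from exact plurisubharmonicity near $x$, together with compact support of the smooth part and positive definiteness of $\omega_\phi$. Your normalization $\log\frac{|z|^2}{4r^2}$ makes $\psi_x\le 0$ explicit, whereas the paper leaves this implicit; its version $\chi\log|z-x|^2$ tacitly needs the coordinate ball to have radius at most $1$.
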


\begin{proof}
    Take $\chi \in C_c^\infty(U)$, equal to $1$ near $x$ with $0\leq \chi \leq 1$, for a coordinate ball $U$ around $x$ and extend $(n+1)\chi(z)\log|z-x|^2$ by zero outside of $U$. Since $\chi$ is compactly supported and $\omega_\phi$ is positive definite, we can find such $C_x$.
\end{proof}

\begin{theorem}\label{thmBergman}
    Let $a\in C^{1,1}_{\mathrm{loc}}(X)$ be a psh function such that $a-\phi \in C^{1,1}_c(X)$. Suppose also that
    \begin{equation*}
        i\partial\bar\partial a \geq \varepsilon \omega_\phi
    \end{equation*}
    for some $\varepsilon>0$. Then, at every point $x$ where $a$ has second derivatives, we have
    \begin{equation}\label{liminf}
        \liminf_{k\rightarrow\infty}k^{-n}B_{ka}(x)\geq \pi^{-n}\det(\partial_j\bar\partial_k a(x)).
    \end{equation}
    This in particular implies that $k^{-n}\beta_{ka}$ vaguely converges to $\operatorname{MA}(a)$ as measures.
\end{theorem}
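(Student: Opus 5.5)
The plan is the standard Hörmander/Demailly peak-section construction (as in \cite{berman2009poly,berman2009bergman}), with Theorem \ref{demailly} on the complete Kähler manifold $(X,\omega_\phi)$ of Proposition \ref{complete}. Fix $x$ where $a$ has second derivatives and work in the chart from the proof of Proposition \ref{localMorse}. There we have the expansion $a(z)=a(0)+2\operatorname{Re}H(z)+\sum_j\lambda_j|z^j|^2+o(|z|^2)$, with $\lambda_j\geq \varepsilon'>0$ by the hypothesis $i\partial\bar\partial a\geq\varepsilon\omega_\phi$. As a local model section take $\sigma_k:=\chi(z)\,e^{kH(z)}dZ$, where $\chi$ is a cutoff equal to $1$ on $|z|\leq k^{-1/2}\log k$ and supported in $|z|\leq 2k^{-1/2}\log k$. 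Its weighted norm is computed by the change of variables $z\mapsto z/\sqrt k$:
\begin{equation*}
\|\sigma_k\|^2_{ka}=e^{-ka(0)}k^{-n}\Bigl(\int_{\mathbb{C}^n}e^{-\sum\lambda_j|w^j|^2}d\lambda+o(1)\Bigr)=e^{-ka(0)}k^{-n}\bigl(\pi^n/\textstyle\prod\lambda_j+o(1)\bigr).
\end{equation*}
The $o(|z|^2)$ error becomes $o(1)$ uniformly on $|w|\leq 2\log k$ only after a little care, since $k\,o(|w|^2/k)$ must be $o(1)$ for $|w|\lesssim\log k$. If this fails, I would use cutoff radius $R/\sqrt k$ with $R$ fixed and let $R\to\infty$ at the end, as in \eqref{rho}, paying an error $e^{-cR^2}$ in the $\bar\partial$-term.

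Next I correct $\sigma_k$ to a holomorphic section. Set $g=\bar\partial\sigma_k=\bar\partial\chi\, e^{kH}dZ$ and solve $\bar\partial f=g$ using Theorem \ref{demailly} with weight $\varphi=ka+\psi_x$, where $\psi_x$ is from Lemma \ref{psix}. For $k\geq 2C_x/\varepsilon$ we get $i\partial\bar\partial\varphi\geq (k\varepsilon-C_x)\omega_\phi\geq \tfrac{k\varepsilon}{2}\omega_\phi$, and $\varphi$ is locally smooth plus psh because $a\in C^{1,1}$ and $\psi_x$ is smooth off $x$ with a log pole. The singularity $(n+1)\log|z-x|^2$ makes $e^{-\varphi}$ non-integrable at $x$, so $f(x)=0$. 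Since $\psi_x\leq 0$ and $\psi_x=O(1)$ on the support of $\bar\partial\chi$, which lies away from $x$, we get
\begin{equation*}
\|f\|^2_{ka}\leq \int c_n f\wedge\bar f e^{-\varphi}\leq \frac{C}{k}\int|g|^2_{\omega_\phi}e^{-ka-\psi_x}\lesssim \frac{1}{k}(k/\log^2k)\,e^{-ka(0)}k^{-n}e^{-c\log^2 k},
\end{equation*}
using $|\bar\partial\chi|^2\lesssim k/\log^2k$ and $e^{-k(a-a(0)-2\operatorname{Re}H)}\leq e^{-c\log^2k}$ on the annulus. This is $o(e^{-ka(0)}k^{-n})$. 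Note that $\psi_x\geq(n+1)\log|z|^2-C$ gives $e^{-\psi_x}\lesssim |z|^{-2(n+1)}\lesssim (k/\log^2 k)^{n+1}$ on the annulus. This is only polynomial in $k$, so it is absorbed by $e^{-c\log^2k}$. Then $s=\sigma_k-f\in\mathcal H(ka)$, because $\sigma_k$ is compactly supported and $f$ has finite norm. Also $f_s(0)=e^{kH(0)}$, and we may normalize $H(0)=0$.

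By the extremal characterization \eqref{Bk},
\begin{equation*}
k^{-n}B_{ka}(0)\geq k^{-n}\frac{e^{-ka(0)}}{\|s\|^2_{ka}}\to\pi^{-n}\prod\lambda_j=\pi^{-n}\det(\partial_j\bar\partial_k a(0)),
\end{equation*}
which is \eqref{liminf}. For the vague convergence, \eqref{liminf} together with \eqref{pointwiseupper} gives pointwise convergence $k^{-n}B_{ka}\to\pi^{-n}\det(\partial\bar\partial a)$ almost everywhere, since $C^{1,1}$ functions are twice differentiable a.e. by Rademacher/Alexandrov. The uniform local bound \eqref{uniformupper} then lets dominated convergence pass the limit against any $g\in C_c(X)$, and the limiting density is $\operatorname{MA}(a)$ by the a.e. formula stated before Proposition \ref{localMorse}.

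The main obstacle is the $\bar\partial$-step. Demailly's estimate must be applied on the whole noncompact $X$ with a weight only $C^{1,1}$ plus a log pole, and this is exactly where completeness of $\omega_\phi$ and the global lower bound $i\partial\bar\partial a\geq\varepsilon\omega_\phi$ are needed; a mere positivity of $dd^ca$ would not absorb $-C_x\omega_\phi$. The remaining bookkeeping concerns the Taylor error in the cutoff regime, handled as described above.
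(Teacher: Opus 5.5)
Your proof is correct and follows the paper's argument in all essentials. Both use the same peak section $\chi e^{kH}dZ$, Theorem~\ref{demailly} on $(X,\omega_\phi)$ with weight $ka+\psi_x$, vanishing of the correction at $x$ forced by the non-integrable pole, and the extremal characterization \eqref{Bk}.

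The only real difference is the cutoff. You shrink it to scale $k^{-1/2}\log k$, whereas the paper keeps $\chi$ fixed inside a ball on which $Q_x\geq c_0|z|^2$. With the fixed cutoff, $\eta=\min_{\operatorname{supp}\bar\partial\chi}Q_x>0$ makes the correction of relative size $O(k^{n/2}e^{-k\eta/2})$, and $\psi_x$ stays bounded on $\operatorname{supp}\bar\partial\chi$. The asymptotics of $\|\sigma_k\|^2_{ka}$ then follow by dominated convergence with majorant $e^{-c_0|w|^2}$. So the fixed cutoff is simply cleaner, and the shrinking one buys nothing here.

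In your version, the two points you flag are harmless.
\begin{enumerate}[label=\textup{(\arabic*)}]
\item On the annulus one has $e^{-\psi_x}\lesssim(k/\log^2k)^{n+1}$, which is absorbed by $e^{-c_0\log^2k}=k^{-c_0\log k}$. Your remark that $\psi_x=O(1)$ on $\operatorname{supp}\bar\partial\chi$ is therefore wrong, though you correct it right after.
\item For the Taylor remainder, only an upper bound on $\|\sigma_k\|^2_{ka}$ is needed. On $|w|\leq 2\log k$ you have $|kQ_x(w/\sqrt k)-\sum_j\lambda_j|w^j|^2|\leq\epsilon_k|w|^2$ with $\epsilon_k\to0$. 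This gives $\int\chi(w/\sqrt k)^2e^{-kQ_x(w/\sqrt k)}\,d\lambda(w)\leq\pi^n/\prod_j(\lambda_j-\epsilon_k)$ directly.
\end{enumerate}

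Your fallback, on the other hand, would fail as stated. With cutoff radius $R/\sqrt k$ and the pole $\psi_x$, the factor $e^{-\psi_x}\sim(k/R^2)^{n+1}$ on the annulus makes the $L^2$ bound for $\|f\|^2_{ka}/\|\sigma_k\|^2_{ka}$ of order $k^{n+1}R^{-4}e^{-cR^2}$. This does not go to $0$ as $k\to\infty$ with $R$ fixed. At that scale you would have to drop $\psi_x$ and instead control $f(x)$ by the submean value inequality on a ball of radius $k^{-1/2}$, as in Proposition~\ref{localMorse}.
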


\begin{proof}
As in Proposition \ref{localMorse}, choose a coordinate ball around $x$ and write
\begin{equation*}
    a(z)=a(0)+2\operatorname{Re}H(z)+\sum_{j=1}^n \lambda_j|z^j|^2+o(|z|^2).
\end{equation*}
Since $a-\varepsilon\phi$ is psh, $\lambda_j>0$ for all $j$. Hence we get (possibly after shrinking the coordinate ball)
\begin{equation}\label{quadratic}
    Q_x(z):=a(z)-a(0)-2\operatorname{Re}H(z) \geq c_0 |z|^2
\end{equation}
for some $c_0>0$. Choose $\chi\in C_c^\infty$ supported in the coordinate ball with $0\leq \chi\leq 1$, equal to $1$ near $x$. Let 
\begin{equation*}
    \sigma_k=\chi e^{kH}dZ, \qquad g_k=\bar\partial \sigma_k.
\end{equation*}
Here $\sigma_k$ is extended by zero to all of $X$. Put $A=\operatorname{supp}(\bar\partial\chi)$. By \eqref{quadratic}, 
\begin{equation*}
    \eta:=\min_{A} Q_x>0.
\end{equation*}

Let $\psi_x$ be given by Lemma \ref{psix} and set $\Psi_k=ka+\psi_x$. Recall that $\omega_\phi$ is a complete K\"ahler metric (Proposition \ref{complete}). Thus the weight $\Psi_k$ satisfies the condition of Demailly's $L^2$-estimate for large enough $k$ since
\begin{equation*}
    i\partial\bar\partial \Psi_k \geq (k\varepsilon-C_x)\omega_\phi.
\end{equation*}
Choose $k_0$ so that $k_0 \geq (C_x+1)/\varepsilon$. Then for $k\geq k_0$, we can apply Theorem \ref{demailly} with $\delta=1$. Since $g_k$ is supported on $A$, we have
\begin{equation*}
    \int_X |g_k|^2_{\omega_\phi}e^{-\Psi_k}\frac{\omega_\phi^n}{n!}=e^{-ka(x)}\int_A \left| \bar\partial\chi dZ\right|^2_{\omega_\phi} e^{-kQ_x-\psi_x}\frac{\omega_\phi^n}{n!}\leq Ce^{-ka(x)-k\eta}<\infty.
\end{equation*}
By Theorem \ref{demailly}, we have a $(n,0)$-form $v_k$ with $\bar\partial v_k=g_k$ and
\begin{equation}\label{l2bound}
    \|v_k\|^2_{ka}\leq  \|v_k\|^2_{\Psi_k} \leq Ce^{-ka(x)-k\eta}.
\end{equation}
The first inequality uses $\psi_x\leq 0$.

We claim that $v_k(x)=0$. Indeed, on a ball around $x$ where $\chi\equiv 1$, $g_k=0$ and $v_k$ is holomorphic. Write $v_k=f_kdZ$. Since $a$ is bounded near $x$ and $e^{-\psi_x(z)}\propto |z|^{-2(n+1)}$, \eqref{l2bound} implies
\begin{equation*}
    \int_{|z|<r}|f_k|^2|z|^{-2(n+1)}d\lambda(z) <\infty.
\end{equation*}
Hence $f_k$ must vanish at $x$ at least of order $2$. It follows that $s_k:=\sigma_k-v_k \in \mathcal{H}(ka)$ and its local coefficient at $x$ is $1$ since $H(0)=0$.

Now we compute the $L^2$-norm of $s_k$. First, by the change of variables $w=\sqrt{k}z$ and \eqref{rho},
\begin{equation}\label{sigmal2}
    \|\sigma_k\|^2_{ka}=e^{-ka(x)}\int_{\mathbb{C}^{n}} \chi^2 e^{-kQ_x}d\lambda=e^{-ka(x)}\frac{\pi^n}{k^n\det{(\partial_j\bar\partial_k a(x))}}(1+o(1)).
\end{equation}
Moreover, \eqref{l2bound} and \eqref{sigmal2} imply
\begin{equation*}
    \frac{\|v_k\|_{ka}}{\|\sigma_k\|_{ka}}=O(k^{n/2}e^{-k\eta/2})=o(1).
\end{equation*}
By the triangle inequality, we obtain
\begin{equation*}
    \|s_k\|^2_{ka} \leq e^{-ka(x)}\frac{\pi^n}{k^n\det{(\partial_j\bar\partial_k a(x))}}(1+o(1)).
\end{equation*}
Since $s_k(x)=1$ in local coordinates, we have \eqref{liminf}. The convergence of Bergman measures now follows from the uniform upper bound on compact sets \eqref{uniformupper}, almost everywhere pointwise density convergence provided by \eqref{pointwiseupper} and \eqref{liminf}, and the dominated convergence theorem.
\end{proof}

\begin{remark}[Infinite dimensionality of $\mathcal{H}(k\phi)$]\label{infinitedimension}
The proof shows that $B_{ka}(x)>0$, hence $\mathcal{H}(ka)$ is nontrivial under some mild positivity condition on the psh weight $a$. In fact one can prove more using the fact that $\phi$ satisfies the Condition $\mathfrak{A}$. Let $0<\alpha<1, (1-\alpha)B<1$. Then for $\psi:=\phi^\alpha$, $i\partial\bar\partial \psi >0$ and $\psi/\phi = \phi^{\alpha-1} \rightarrow 0$ at infinity. Hence for fixed $k$ and $C$, there exists a constant $D$ such that $C\psi< k/2\phi+D$. For each $m\in\mathbb{Z}_{>0}$, define $\nu_m \sim (n+m)\chi\log|z|^2$ as in Lemma \ref{psix}. Choose $C_m$ so that $i\partial\bar\partial (C_m\psi+\nu_m)\geq 0$ and let $\Psi_m:=k/2\phi+C_m\psi+\nu_m$. Then $i\partial\bar\partial \Psi_m \geq k/2\omega_\phi$ and $\Psi_m \leq k\phi+D_m$ for some $D_m$. Now repeating the proof of Theorem \ref{thmBergman} with $\sigma_m=\chi (z^1)^mdZ$, we get an infinite sequence of linearly independent elements of $\mathcal{H}(k\phi)$. Hence $\mathcal{H}(k\phi)$ is infinite dimensional if $\phi$ satisfies the Condition $\mathfrak{A}$. If the psh weight $\phi$ does not satisfy Condition $\mathfrak{A}$, $\mathcal{H}(k\phi)$ can be finite dimensional (for example, consider $X=\mathbb{C}$ and $\phi=\log(1+|z|^2)$).
\end{remark}

\subsection{Proof of the Large deviation principle}

To apply the abstract G\"artner--Ellis theorem, we need to study the logarithmic moment generating function of the DPP $\Theta_k$. By the definition of determinantal point processes, for $g\in C_c(X)$, we have (see, for example, \cite{eum2025limiting})
\begin{align*}
    \mathbb{E}\left[\prod_{x\in\Theta_k}(1+g(x))\right]&=\sum_{\ell=0}^\infty \frac{1}{\ell!}\int_{X^\ell} \det\left[K^{dV,e}_{k\phi}(x_i,x_j)\right]_{i,j}\prod_{j=1}^\ell g(x_j)e^{-k\phi(x_j)}dV(x_j)\\
    &=\det_{U_e(\mathcal{H}(k\phi))}\left(I+U_e\circ T_{k\phi,g}\circ U_e^{-1}\right)=\det_{\mathcal{H}(k\phi)}\left(I+T_{k\phi,g}\right).
\end{align*}
Recall that $T_{k\phi,g}$ is indeed a trace class operator (Lemma \ref{lemma1}). Plugging in $g=e^{-ku}-1$ with $u\in C_c^\infty(X)$, we obtain
\begin{equation}\label{logmomgenformula}
    \mathbb{E}\left[e^{-k\langle u,\Theta_k\rangle}\right]=\det_{\mathcal{H}(k\phi)}\left(I+T_{k\phi,e^{-ku}-1}\right)=:\det_{\mathcal{H}(k\phi)}\left(T_{k\phi,e^{-ku}}\right).
\end{equation}
Since $T_{k\phi,e^{-ku}}\geq e^{-k\|u\|_\infty}I$, it is invertible. Define for $u,v \in C_c(X)$,
\begin{equation*}
    \mathcal{D}_{k,\phi+u}(v):=\det_{\mathcal{H}(k(\phi+u))}T_{k(\phi+u),e^{-kv}}.
\end{equation*}

\begin{lemma}\label{Englis}
    For $u,v \in C_c(X)$, 
    \begin{equation*}
        \mathcal{D}_{k,\phi}(u+v)=\mathcal{D}_{k,\phi}(v)\mathcal{D}_{k,\phi+v}(u).
    \end{equation*}
\end{lemma}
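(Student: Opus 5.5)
The identity is a cocycle property of Toeplitz determinants, analogous to the finite-dimensional identity $\det G_{k}(\phi, u+v)/\det G_k(\phi,0)=\bigl(\det G_k(\phi,v)/\det G_k(\phi,0)\bigr)\bigl(\det G_k(\phi,u+v)/\det G_k(\phi,v)\bigr)$ for Gram matrices. The plan is to exhibit $T_{k\phi,e^{-kw}}$ as the Gram operator that compares two equivalent inner products on the same vector space, and then to use multiplicativity of Fredholm determinants.

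\emph{Step 1: Gram operator representation.} Since $v$, $u$ are compactly supported and continuous, the spaces $\mathcal{H}(k\phi)$, $\mathcal{H}(k(\phi+v))$ and $\mathcal{H}(k(\phi+u+v))$ coincide as vector spaces, with equivalent norms. For $s,t\in\mathcal{H}(k\phi)$ we have
\begin{equation*}
\langle T_{k\phi,e^{-kw}}s,t\rangle_{k\phi}=\int_X c_n\, s\wedge\bar t\, e^{-kw}e^{-k\phi}=\langle s,t\rangle_{k(\phi+w)}.
\end{equation*}
Thus $T_{k\phi,e^{-kw}}$ is the unique positive, bounded, invertible operator $G_{\phi}^{\phi+w}$ on $\mathcal{H}(k\phi)$ representing the inner product of $\mathcal{H}(k(\phi+w))$ with respect to that of $\mathcal{H}(k\phi)$. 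In the same way, $T_{k(\phi+v),e^{-ku}}$ is the operator $G_{\phi+v}^{\phi+u+v}$ on $\mathcal{H}(k(\phi+v))$.

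\emph{Step 2: Composition.} Let $J:\mathcal{H}(k\phi)\to\mathcal{H}(k(\phi+v))$ be the identity map of the underlying space, which is bounded and invertible. Then for all $s,t$,
\begin{equation*}
\langle s,t\rangle_{k(\phi+u+v)}=\langle G_{\phi+v}^{\phi+u+v}s,t\rangle_{k(\phi+v)}=\langle G_\phi^{\phi+v}G_{\phi+v}^{\phi+u+v}s,t\rangle_{k\phi}.
\end{equation*}
This gives the operator identity $G_\phi^{\phi+u+v}=G_\phi^{\phi+v}\,J^{-1}G_{\phi+v}^{\phi+u+v}J$ on $\mathcal{H}(k\phi)$.

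\emph{Step 3: Determinants.} Each $G-I$ is trace class by Lemma \ref{lemma1}, because $e^{-kw}-1\in C_c(X)$. The Fredholm determinant is multiplicative on $I+\mathcal{S}_1$, so $\det(G_\phi^{\phi+v}\,J^{-1}G J)=\det G_\phi^{\phi+v}\cdot\det(J^{-1}GJ)$. It remains to show $\det(J^{-1}GJ)=\det G$. Write $J^{-1}(G-I)J$. This operator is trace class on $\mathcal{H}(k\phi)$ because $J$ and $J^{-1}$ are bounded, and the determinant is invariant under similarity by bounded invertible maps between Hilbert spaces. To check this invariance, use $\det(I+A)=\exp\operatorname{Tr}\log(I+A)$ for small $A$ together with analytic continuation, or the eigenvalue (Lidskii) characterization: similar operators have the same nonzero eigenvalues with the same algebraic multiplicities. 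This yields $\mathcal{D}_{k,\phi}(u+v)=\mathcal{D}_{k,\phi}(v)\mathcal{D}_{k,\phi+v}(u)$.

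\emph{Main obstacle.} The one delicate point is the similarity invariance across two different Hilbert structures, where $J$ is not unitary. I would handle it with Lidskii's theorem. The nonzero spectrum and the algebraic multiplicities of $J^{-1}(G-I)J$ and of $G-I$ coincide, since $J$ maps generalized eigenspaces onto generalized eigenspaces. Hence the eigenvalue products defining the two Fredholm determinants agree.
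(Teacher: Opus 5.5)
Your proposal is correct and follows essentially the same route as the paper: both use $\langle T_{k\phi,e^{-kw}}s,t\rangle_{k\phi}=\langle s,t\rangle_{k(\phi+w)}$ to obtain the factorization $T_{k\phi,e^{-k(u+v)}}=T_{k\phi,e^{-kv}}\,J^{-1}T_{k(\phi+v),e^{-ku}}J$, and then conclude by multiplicativity of Fredholm determinants. Your Lidskii argument for $\det(J^{-1}BJ)=\det B$ with non-unitary $J$ fills in a step that the paper uses only implicitly.
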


\begin{proof}
Let $J: \mathcal{H}(k\phi)\rightarrow\mathcal{H}(k(\phi+v))$ be a bounded invertible operator defined by the identity map. Put
\begin{equation*}
    A=T_{k\phi,e^{-kv}},\qquad B=T_{k(\phi+v),e^{-ku}}.
\end{equation*}
For $f,g\in \mathcal{H}(k\phi)$,
\begin{align*}
    \langle A J^{-1} B J f,g\rangle_{k\phi}&=\langle M_{e^{-kv}} J^{-1} B J f,g\rangle_{k\phi}=\langle B J f,Jg \rangle_{k(\phi+v)}\\
    &=\langle M_{e^{-ku}}J f, Jg\rangle_{k(\phi+v)} =\langle T_{k\phi,e^{-k(u+v)}}f,g \rangle_{k\phi}.
\end{align*}
    That is, $T_{k\phi,e^{-k(u+v)}}=A J^{-1} B J$. Since Fredholm determinants are multiplicative on the product of operators, we prove the claim.
\end{proof}

\begin{lemma}
    Let $v\in C^{1,1}_c(X)$ and suppose $a_t=\phi+tv$ is psh for $0\leq t\leq 1$. Then
    \begin{equation}\label{derivativeformula}
        \frac{d}{dt}\log \mathcal{D}_{k,\phi}(tv)=-k\int_X v \beta_{ka_t}.
    \end{equation}
\end{lemma}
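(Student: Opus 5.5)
The plan is to reduce the derivative at time $t$ to a derivative at time $0$ for the shifted weight, using the cocycle identity of Lemma \ref{Englis}, and then to differentiate a Fredholm determinant via the trace formula \eqref{traceformula}.

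\textbf{Step 1: reduction to $t_0$ fixed.} Fix $t_0\in[0,1]$ and write $tv=t_0v+(t-t_0)v$. Lemma \ref{Englis} gives
\begin{equation*}
\mathcal{D}_{k,\phi}(tv)=\mathcal{D}_{k,\phi}(t_0v)\,\mathcal{D}_{k,a_{t_0}}((t-t_0)v).
\end{equation*}
Since $\mathcal{D}_{k,\phi}(t_0 v)>0$ (the operator $T_{k\phi,e^{-kt_0v}}$ is positive and invertible), we get
\begin{equation*}
\frac{d}{dt}\Big|_{t=t_0}\log\mathcal{D}_{k,\phi}(tv)=\frac{d}{ds}\Big|_{s=0}\log\det_{\mathcal{H}(ka_{t_0})}\bigl(I+T_{ka_{t_0},e^{-ksv}-1}\bigr).
\end{equation*}
The underlying space $\mathcal{H}(ka_{t_0})$ is the same vector space as before, with an equivalent norm, because $v$ is compactly supported.

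\textbf{Step 2: differentiate the Fredholm determinant at $s=0$.} Put $g_s=e^{-ksv}-1$, a continuous function supported in the fixed compact set $K=\operatorname{supp}v$, with $g_0=0$ and $\|g_s+ksv\|_\infty=O(s^2)$. Since $g\mapsto T_{ka_{t_0},g}$ is linear, we have $T_{ka_{t_0},g_s}=-ks\,T_{ka_{t_0},v}+R_s$ with $R_s=T_{ka_{t_0},g_s+ksv}$.

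The key estimate is the trace-norm bound
\begin{equation*}
\|T_{ka,g}\|_1\le \|g\|_\infty\int_K\beta_{ka}.
\end{equation*}
To prove it, factor $T_{ka,g}=A_K^*M_gA_K$, where $A_K=\mathbf{1}_KP_{ka}$ is the Hilbert--Schmidt operator from the proof of Lemma \ref{lemma1}, and then use the Hilbert--Schmidt bound for each factor. This gives $\|R_s\|_1=O(s^2)$.

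Now use the standard expansion $\log\det(I+T)=\operatorname{Tr}T+O(\|T\|_1^2)$ for small trace class $T$, which follows from the inequality $|\det(I+T)-1-\operatorname{Tr}T|\le \|T\|_1^2e^{\|T\|_1+1}$. Together with the trace formula \eqref{traceformula}, it yields
\begin{equation*}
\log\det(I+T_{ka_{t_0},g_s})=-ks\int_Xv\,\beta_{ka_{t_0}}+O(s^2).
\end{equation*}
This gives \eqref{derivativeformula} at $t=t_0$, with one-sided derivatives at the endpoints.

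\textbf{Main obstacle.} The delicate point is working rigorously on a space whose norm changes with $t$. In Step 1 one must make sure the cocycle identity is applied in the right order: Lemma \ref{Englis} is stated with $v$ as the base shift, so we use $u=(t-t_0)v$ and base shift $t_0v$. Notably, plurisubharmonicity of $a_t$ is not really needed for this computation. It only guarantees that $\beta_{ka_t}$ is the Bergman measure of a weight to which the asymptotics of Theorem \ref{thmBergman} later apply. The formula itself only requires $a_t-\phi$ to be compactly supported and bounded, so that $\beta_{ka_t}$ is locally finite. That local finiteness comes from the submean value inequality, in the same way as \eqref{uniformupper}.
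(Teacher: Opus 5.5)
Your argument is correct, but it differentiates the determinant differently from the paper.

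\begin{itemize}
\item \textbf{The paper's route.} It invokes Simon's formula $\frac{d}{dt}\log\det A_t=\operatorname{Tr}(A_t^{-1}A_t')$ at an arbitrary $t$, with $A_t=T_{k\phi,e^{-ktv}}$. It then uses the operator identity $A_t'=T_{k\phi,-kve^{-ktv}}=A_tJ_t^{-1}T_{ka_t,-kv}J_t$, extracted from the proof of Lemma \ref{Englis}, to move the trace onto $\mathcal{H}(ka_t)$.
\item \textbf{Your route.} You apply the multiplicativity statement of Lemma \ref{Englis} itself to recentre at $a_{t_0}$. After that you only need the derivative at the identity, where the elementary expansion $\det(I+T)=1+\operatorname{Tr}T+O(\|T\|_1^2)$ suffices.
\end{itemize}

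The two arguments are the integrated and infinitesimal forms of the same factorization. Yours is more self-contained. The bound $\|T_{ka,g}\|_1\le\|g\|_\infty\int_K\beta_{ka}$, which you get from $T_{ka,g}=A_K^*M_gA_K$, is exactly what shows that $t\mapsto A_t$ is differentiable in trace norm. The paper takes that for granted when it writes $A_t'=C_t$ and cites Simon. The paper's version is shorter only because it outsources this step.

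Your remark that plurisubharmonicity of $a_t$ plays no role is accurate for both proofs. In fact, since $\mathcal{D}_{k,\phi}(tv)$ is defined for all real $t$, the derivatives are two-sided even at $t=0$ and $t=1$.
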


\begin{proof}
By \cite[Corollary 5.2]{simon2005trace}, we have
\begin{equation*}
    \frac{d}{dt}\log \det T_{k\phi,e^{-ktv}}=\operatorname{Tr}\left(T^{-1}_{k\phi,e^{-ktv}}T_{k\phi,-kve^{-ktv}} \right).
\end{equation*}
Let $J_t:\mathcal{H}(k\phi)\rightarrow\mathcal{H}(ka_t)$ be the identity map and set
\begin{equation*}
    A_t=T_{k\phi,e^{-ktv}}, \quad C_t=T_{k\phi,-kve^{-ktv}}, \quad S_t=T_{ka_t,-kv}.
\end{equation*}
Then the proof of Lemma \ref{Englis} gives $C_t=A_t J_t^{-1} S_t J_t$. Since $A'_t=C_t$, it follows that
\begin{equation*}
    \operatorname{Tr}\left(T^{-1}_{k\phi,e^{-ktv}}T_{k\phi,-kve^{-ktv}} \right)=\operatorname{Tr}\left(A_t^{-1}A'_t\right)=\operatorname{Tr}\left(J_t^{-1} S_t J_t\right)=\operatorname{Tr}\left(T_{ka_t,-kv}\right).
\end{equation*}
The trace formula \eqref{traceformula} then gives \eqref{derivativeformula}.
\end{proof}

Now we can combine our results so far to identify the limit of the logarithmic moment generating functions $\Lambda$ in the abstract G\"artner--Ellis theorem in our setting.

\begin{theorem}\label{wow}
    For every $u\in C_c^\infty(X)$,
    \begin{equation*}
        \lim_{k\rightarrow\infty}\frac{1}{k^{n+1}}\log\mathbb{E}\left[e^{-k\langle u,\Theta_k\rangle}\right]=-\mathcal{E}_\phi\left(P(\phi+u)\right).
    \end{equation*}
\end{theorem}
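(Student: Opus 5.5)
The plan follows Berman's strategy in the compact case and splits $u$ into the part realized by the envelope and a nonnegative remainder. Set $p=P(\phi+u)$ and $v:=p-\phi$. By Theorem \ref{plurimain}, $v\in C^{1,1}_c(X)$ and $v\leq u$. By \eqref{logmomgenformula} and Lemma \ref{Englis},
\begin{equation*}
\log\mathbb{E}\left[e^{-k\langle u,\Theta_k\rangle}\right]=\log\mathcal D_{k,\phi}(v)+\log\mathcal D_{k,p}(u-v).
\end{equation*}
I will show that the first term is $-k^{n+1}\mathcal E_\phi(p)+o(k^{n+1})$ and that the second is $o(k^{n+1})$.

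\emph{First term.} The path $a_t=\phi+tv=(1-t)\phi+tp$ consists of psh functions. By \eqref{derivativeformula},
\begin{equation*}
k^{-n-1}\log\mathcal D_{k,\phi}(v)=-\int_0^1dt\int_X v\,k^{-n}\beta_{ka_t}.
\end{equation*}
The uniform local bound \eqref{uniformupper} from Proposition \ref{localMorse} applies on the compact support of $v$, uniformly in $t$, so dominated convergence in $t$ is justified. It remains to show that $k^{-n}\beta_{ka_t}\to\operatorname{MA}(a_t)$ vaguely for each $t<1$. Since $i\partial\bar\partial a_t\geq(1-t)\omega_\phi$, Theorem \ref{thmBergman} applies for $t<1$ and gives exactly this; the single endpoint $t=1$ is irrelevant for the $dt$-integral. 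Then \eqref{energyderivative} with $b=\phi$ gives $\int_0^1\int_X v\operatorname{MA}(a_t)\,dt=\mathcal E_\phi(p)-\mathcal E_\phi(\phi)=\mathcal E_\phi(p)$.

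\emph{Second term.} Let $w=u-v\geq0$, which is continuous and compactly supported. Since $T_{kp,e^{-kw}}\leq I$, we have $\log\mathcal D_{k,p}(w)\leq0$, so only a lower bound is needed. Use $\log\det T\geq\operatorname{Tr}\log T$ with concavity of $\log$; the operator Jensen inequality gives $\log T_{kp,e^{-kw}}\geq T_{kp,\log e^{-kw}}=-kT_{kp,w}$. Then \eqref{traceformula} yields
\begin{equation*}
\log\mathcal D_{k,p}(w)\geq -k\int_X w\,\beta_{kp}.
\end{equation*}
Thus it suffices to show $k^{-n}\int_X w\,\beta_{kp}\to\int_X w\operatorname{MA}(p)$, because this limit is zero by the orthogonality relation \eqref{orthogonality}. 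The upper bound suffices here, since $w\geq0$. The pointwise estimate \eqref{pointwiseupper} holds a.e. because $p\in C^{1,1}_{\mathrm{loc}}$, and together with \eqref{uniformupper} and reverse Fatou it gives
\begin{equation*}
\limsup_{k\to\infty} k^{-n}\int_X w\,\beta_{kp}\leq\int_X w\operatorname{MA}(p)=0.
\end{equation*}
Here we use that the a.e. density $\pi^{-n}\det(\partial\bar\partial p)$ represents $\operatorname{MA}(p)$ for $C^{1,1}$ functions. Since $\operatorname{Tr}\log T$ of a trace-class perturbation of the identity equals $\log\det T$, the Jensen step is legitimate.

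\emph{Main obstacle.} The hardest step is expected to be this operator-Jensen bound in infinite dimensions. It requires the compression inequality $\log(PM_fP)\geq PM_{\log f}P$ for the Bergman projection, where $f=e^{-kw}$ is bounded above and below away from zero, and this should follow from the Hansen--Pedersen Jensen inequality for contractions. A fallback would be to compare with finite-rank spectral truncations of $T_{kp,e^{-kw}}-I$. A secondary point is verifying that $\log\det=\operatorname{Tr}\log$ applies, which holds because $T_{kp,e^{-kw}}-I=T_{kp,e^{-kw}-1}$ is trace class with spectrum in $(-1,0]$.
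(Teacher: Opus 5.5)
Your proposal is correct and follows the paper's proof essentially step for step. You use the same splitting via Lemma \ref{Englis} into $\mathcal{D}_{k,\phi}(v)$ and $\mathcal{D}_{k,p}(u-v)$, and the same path $a_t=\phi+tv$ with Theorem \ref{thmBergman} for $t<1$. You also use the same operator-convexity (Davis/Jensen compression) bound $-k\operatorname{Tr}T_{kp,w}\leq\log\mathcal{D}_{k,p}(w)\leq 0$, finished with the local Morse upper bound, (reverse) Fatou, and the orthogonality relation. One small wording point: $\log\det T=\operatorname{Tr}\log T$ is an equality here, not an inequality, as you yourself note later.
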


\begin{proof}
    By \eqref{logmomgenformula}, we need to show that
    \begin{equation*}
         \lim_{k\rightarrow\infty}\frac{1}{k^{n+1}}\log\mathcal{D}_{k,\phi}(u)=-\mathcal{E}_\phi\left(P(\phi+u)\right).
    \end{equation*}
Let $p=P(\phi+u)=\phi+v$, $r=u-v=(\phi+u)-p$. By Theorem \ref{plurimain}, $v\in C^{1,1}_c(X)$ and $r\geq 0$ is compactly supported. Lemma \ref{Englis} gives
\begin{equation}\label{I+II}
    \log \mathcal{D}_{k,\phi}(u)=\log \mathcal{D}_{k,\phi}(v)+\log \mathcal{D}_{k,p}(r).
\end{equation}
    For the first term, consider the path $a_t=\phi+tv, \,t\in [0,1]$. For each $t$, $a_t$ is psh and $a_t-\phi$ has a uniformly bounded $C^{1,1}$-norm. Integrating \eqref{derivativeformula} then gives
    \begin{equation*}
        \frac{1}{k^{n+1}}\log\mathcal{D}_{k,\phi}(v)= \frac{1}{k^{n+1}}\int_0^1 dt\frac{d}{dt}\log \mathcal{D}_{k,\phi}(tv)=-\frac{1}{k^{n}}\int_0^1 dt \int_X v\beta_{ka_t}.
    \end{equation*}
Since $i\partial\bar\partial a_t=(1-t)i\partial\bar\partial\phi +ti\partial\bar\partial p\geq (1-t)\omega_\phi$, we can apply Theorem \ref{thmBergman} for $t<1$ and, using the uniform upper bound \eqref{uniformupper}, obtain
\begin{equation*}
    \lim_{k\rightarrow\infty}-\int_0^1 dt \int_X v\,\frac{1}{k^n}\beta_{ka_t}= -\int_0^1 dt \int_X v\operatorname{MA}(a_t)=-\mathcal{E}_\phi\left(p\right).
\end{equation*}
The last equality uses Lemma \ref{aaaaaaa}. Thus we only need to show that the limit of the second term in \eqref{I+II} is zero. For that purpose we need the following operator theoretic lemma:
\begin{lemma}\label{Schwarz}
    If $p$ is psh and $r\geq 0$ is compactly supported, then
    \begin{equation}\label{oprSchwarz}
        -k\operatorname{Tr}T_{kp,r}\leq \log \mathcal{D}_{k,p}(r)\leq 0.
    \end{equation}
\end{lemma}
\begin{proof}[Proof of Lemma \ref{Schwarz}]
    Let $B=T_{kp,e^{-kr}}$. Then $e^{-k\|r\|_\infty}I\leq B\leq I$, which gives the upper bound in \eqref{oprSchwarz}. Since $B-I$ is trace class and $B$ is strictly positive, by functional calculus $\log B$ is trace class and satisfies $\operatorname{Tr}\log B=\log\det B$. Since $-\log$ is an operator convex function\footnote{Use the integral representation $-\log x=\int_0^\infty\left((x+s)^{-1}-(1+s)^{-1}\right)ds$ and the fact that $x^{-1}$ is operator convex.}, \cite{davis1957schwarz} implies
    \begin{equation*}
        -\log B=-\log(P_{kp}M_{e^{-kr}}) \leq -P_{kp}\log(M_{e^{-kr}})=kT_{kp,r}.
    \end{equation*}
    Taking traces we get the lower bound in \eqref{oprSchwarz}.
\end{proof}
Now we can finish the proof. By \eqref{oprSchwarz}, \eqref{traceformula}, Proposition \ref{localMorse}, and Fatou's lemma, we have
\begin{align*}
    &0\geq \frac{1}{k^{n+1}}\log \mathcal{D}_{k,p}(r) \geq -\frac{1}{k^{n}}\operatorname{Tr}T_{kp,r}=-\frac{1}{k^n}\int_X r\beta_{kp}\,;\\
    &0\leq \limsup_{k\rightarrow\infty}\frac{1}{k^n}\int_X r\beta_{kp} \leq \int_X r \operatorname{MA}(p)=0.
\end{align*}
The last equality follows from the orthogonality relation \eqref{orthogonality}.
\end{proof}

Finally, to use the abstract G\"artner--Ellis theorem, we need to prove exponential tightness of the family. The point process $\Theta_k$ is a Borel random element of $M_+$. Multiplication by $k^{-n}$ is vaguely continuous, so $L_k:=k^{-n}\Theta_k$ is also a Borel random element of $M_+$. We have shown that the inclusion $\iota:M_+ \hookrightarrow (M,\tau)$ is continuous and we put
\begin{equation*}
    \Gamma_k:=\mathrm{Law}(\iota(L_k)), \qquad r_k:=k^{n+1}.
\end{equation*}
These are Borel probability measures on $(M,\tau)$ supported on $M_+$.

\begin{proposition}\label{thmexptight}
The family of probability measures $\{\Gamma_k\}$ on $M$ is exponentially tight at speed $r_k$.
\end{proposition}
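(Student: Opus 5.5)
We build the compact sets from Theorem \ref{thmcompact}. Fix an exhaustion of $X$ by compacts $K_1\Subset K_2\Subset\cdots$, $\bigcup_j K_j=X$, and cutoffs $\chi_j\in C_c^\infty(X)$ with $0\le\chi_j\le1$ and $\chi_j=1$ on $K_j$. For constants $M_j>0$ to be chosen, set
\begin{equation*}
\mathcal{K}=\Bigl\{\mu\in M_+:\int_X\chi_j\,d\mu\le M_j\ \text{for all } j\Bigr\}.
\end{equation*}
Each condition is vaguely closed, since $\chi_j\in C_c(X)$. Every nonnegative $f\in C_c(X)$ is bounded by $\|f\|_\infty\chi_j$ for some $j$. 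So $\mathcal{K}$ is compact in $M_+$ by Theorem \ref{thmcompact}. Its image under the continuous inclusion $\iota$ is compact in $(M,\tau)$, and $\tau$ is Hausdorff. It therefore suffices to make $\Gamma_k(M\setminus\mathcal{K})\le\sum_j\mathbb{P}(\langle\chi_j,L_k\rangle>M_j)$ small at speed $k^{n+1}$ with the right rate.

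For a single $j$, Chebyshev's exponential inequality with parameter $k^{n+1}\lambda$ gives
\begin{equation*}
\mathbb{P}\bigl(\langle\chi_j,L_k\rangle>M\bigr)\le e^{-k^{n+1}\lambda M}\,\mathbb{E}\bigl[e^{k\lambda\langle\chi_j,\Theta_k\rangle}\bigr].
\end{equation*}
The expectation equals $\mathcal{D}_{k,\phi}(-\lambda\chi_j)=\det T_{k\phi,e^{k\lambda\chi_j}}$ by \eqref{logmomgenformula}, which also holds for $g=e^{k\lambda\chi_j}-1\in C_c(X)$. We bound it directly. Write $T_{k\phi,e^{k\lambda\chi_j}}=I+T_{k\phi,g}$ with $0\le g\le e^{k\lambda}\chi_j$. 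Then $\log\det(I+T)\le\operatorname{Tr}T$, so by \eqref{traceformula}
\begin{equation*}
\log\mathbb{E}\bigl[e^{k\lambda\langle\chi_j,\Theta_k\rangle}\bigr]\le e^{k\lambda}\int_X\chi_j\,\beta_{k\phi}\le e^{k\lambda}C_jk^n,
\end{equation*}
using the uniform local bound \eqref{uniformupper} on $\operatorname{supp}\chi_j$.

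Here the naive choice fails: $e^{k\lambda}$ is exponential in $k$, so we cannot fix $\lambda$. Taking $\lambda=1$ (or $\lambda=1/k$) is the wrong scaling. Instead use the sharper route. Theorem \ref{wow} with $u=-\lambda\chi_j$ gives
\begin{equation*}
\frac{1}{k^{n+1}}\log\mathbb{E}\bigl[e^{k\lambda\langle\chi_j,\Theta_k\rangle}\bigr]\to-\mathcal{E}_\phi\bigl(P(\phi-\lambda\chi_j)\bigr)=:\Lambda_j(\lambda)<\infty,
\end{equation*}
which is finite by Theorem \ref{diffofequil}. Hence, with $\lambda=1$,
\begin{equation*}
\limsup_{k\to\infty}\frac{1}{k^{n+1}}\log\mathbb{P}\bigl(\langle\chi_j,L_k\rangle>M\bigr)\le-M+\Lambda_j(1).
\end{equation*}
Choose $M_j=\alpha+j+\Lambda_j(1)$ (plus a margin), so that this rate is at most $-\alpha-j$.

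The main obstacle is summing over infinitely many $j$ uniformly in $k$. A limsup for each fixed $j$ does not control the tail sum. We avoid this by choosing the $M_j$ growing so fast that a non-asymptotic bound holds for all $k\ge1$ simultaneously. From the elementary trace bound above, $\log\mathbb{E}[e^{k\langle\chi_j,\Theta_k\rangle}]\le e^{k}C_jk^n$. So we need $k^{n+1}M_j\ge e^kC_jk^n+k^{n+1}(\alpha+j)$ for all $k$, which no fixed $M_j$ achieves. This forces us to use the trace bound with $\lambda=1$ only for $k\le k_j$, and the limit from Theorem \ref{wow} for $k>k_j$ with $k_j$ increasing.

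A cleaner alternative is to take $\lambda=\lambda_k$ scaling correctly and use monotonicity in $j$. Replace the family by a single $j$ at a time with $M_j$ chosen so that $\sup_{k\ge1}k^{-(n+1)}\log\mathbb{E}[e^{k\langle\chi_j,\Theta_k\rangle}]\le\Lambda_j(1)+1$ for $k\ge k_j$. Handle the finitely many $k<k_j$ by the crude bound, enlarging $M_j$ by $e^{k_j}C_j$. Then
\begin{equation*}
\sum_j\mathbb{P}\bigl(\langle\chi_j,L_k\rangle>M_j\bigr)\le\sum_je^{-k^{n+1}(\alpha+j)}\le2e^{-k^{n+1}\alpha},
\end{equation*}
which gives the required bound $\limsup_k k^{-(n+1)}\log\Gamma_k(M\setminus\mathcal{K})\le-\alpha$.
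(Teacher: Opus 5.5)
Your argument is correct and is essentially the paper's. The paper sets $b_j=\sup_{k\ge1}k^{-(n+1)}\log\mathbb{E}e^{k\langle\chi_j,\Theta_k\rangle}$, which is finite by Theorem \ref{wow} for large $k$. It then takes thresholds $b_j+\alpha+j$ and applies exponential Markov and a union bound summing to $e^{-r_k\alpha}/(1-e^{-r_k})$, exactly as in your final paragraph. Your explicit estimate $\log\det(I+T_{k\phi,g})\le\operatorname{Tr}T_{k\phi,g}\le e^{k}C_jk^n$, via \eqref{traceformula} and \eqref{uniformupper}, usefully justifies the bounded-$k$ range that the paper calls trivial. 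The exploratory detours about $\lambda_k$ and monotonicity in $j$ are unnecessary and should be cut.
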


\begin{proof}
    Choose compact sets $K_j \Subset X$ of $X$ such that $K_j \subset K_{j+1}^\circ$, $\bigcup_{j\geq 0} K_j=X$. Choose $\chi_j\in C_c^\infty(K_{j+1})$ with $\chi_j\geq 0$ and equal to $1$ on a neighborhood of $K_j$. Set
    \begin{equation}\label{bj}
        b_j:=\sup_{k\geq 1}\frac{1}{r_k}\log \mathbb{E}\exp\left(r_k\langle \chi_j,L_k \rangle\right).
    \end{equation}
    We first show that $b_j$ is finite. For fixed $k$, we have (by \eqref{logmomgenformula})
    \begin{equation*}
        \mathbb{E}\exp\left(r_k\langle \chi_j,L_k \rangle\right)= \mathbb{E}\exp\left(k\langle \chi_j,\Theta_k \rangle\right)=\det_{\mathcal{H}(k\phi)} \left(I+T_{k\phi,e^{k\chi_j}-1}\right)<\infty.
    \end{equation*}
By Theorem \ref{wow}, the normalized logarithms in \eqref{bj} converge to the finite value $-\mathcal{E}_\phi(P(\phi-\chi_j))$. Hence they are bounded and $b_j<\infty$ (they are trivially bounded on finite interval $k\in[1,R]$). Moreover, $\chi_j \geq 0$ and $L_k \geq 0$ imply that $b_j \geq 0$. Therefore, we have
\begin{equation}\label{222}
    \mathbb{E}e^{r_k\langle \chi_j,L_k \rangle} \leq e^{r_kb_j}
\end{equation}
for all $k, j$.

For $\alpha>0$, define
\begin{equation*}
    \mathcal{C}_\alpha:=\{ \mu\in M_+ : \langle \chi_j,\mu\rangle \leq b_j+\alpha+j \;\,\text{for all}\;\, j\geq 0 \}.
\end{equation*}
By Theorem \ref{thmcompact}, each $\mathcal{C}_\alpha$ is a compact subset of $M_+$, hence a compact subset of $M$. By Markov's inequality and \eqref{222}, we have 
\begin{align*}
    \Gamma_k(M\setminus \mathcal{C}_\alpha) &\leq \sum_j \mathbb{P}\left[ \langle \chi_j, L_k \rangle >b_j+\alpha+j\right] = \sum_j \mathbb{P}\left[ e^{r_k\langle \chi_j, L_k \rangle} > e^{r_k(b_j+\alpha+j)}\right]\\
    &\leq \sum_j e^{-r_k(b_j+\alpha+j)}\mathbb{E}e^{r_k\langle \chi_j, L_k \rangle}\leq \sum_j e^{-r_k(\alpha+j)}=\frac{e^{-r_k\alpha}}{1-e^{-r_k}}.
\end{align*}
Consequently,
\begin{equation*}
    \limsup_{k\rightarrow\infty}\frac{1}{r_k}\log \Gamma_k(M\setminus \mathcal{C}_\alpha) \leq -\alpha,
\end{equation*}
and this completes the proof.
\end{proof}

Before we finally state our main theorem, we need to discuss the rate function. Theorem \ref{wow} gives
\begin{equation*}
    \lim_{k\rightarrow\infty}\frac{1}{r_k}\Lambda_k(r_k f)= \lim_{k\rightarrow\infty}\frac{1}{k^{n+1}}\log \mathbb{E} e^{k\langle f,\Theta_k\rangle}= -\mathcal{E}_\phi(P(\phi-f))=:\Lambda(f).
\end{equation*}
By Theorem \ref{diffofequil}, $\Lambda$ is finite and Gateaux differentiable. Together with Proposition \ref{thmexptight}, this allows us to apply the abstract G\"artner--Ellis theorem (Theorem \ref{thmGE}) and we obtain an LDP for $\{\Gamma_k\}$ on $(M,\tau)$, at speed $r_k$, with rate function
\begin{equation}\label{Legendre}
    \Lambda^*(\mu):=\sup_{f\in C_c^\infty(X)}\biggl\{ \int_X fd\mu -\Lambda(f)\biggr\}.
\end{equation}
Consider a measure $\mu\notin M_+$. Then there is a function $0 \leq \chi \in C_c^\infty(X)$ such that $\langle \chi,\mu \rangle <0$. Since $L_k \geq 0$, we have for $t>0$,
\begin{equation*}
    \mathbb{E}e^{-tr_k\langle \chi, L_k \rangle} \leq 1 \implies \Lambda(-t\chi)\leq 0.
\end{equation*}
Using $f=-t\chi$ in \eqref{Legendre} yields
\begin{equation*}
     \Lambda^*(\mu) \geq -t\langle \chi,\mu\rangle-\Lambda(-t\chi) \geq -t\langle \chi,\mu\rangle \longrightarrow \infty
\end{equation*}
as $t\rightarrow \infty$. Thus $\Lambda^*=\infty$ on $M\setminus M_+$. For $\mu\in M_+$, substituting $u=-f$ gives
\begin{equation*}
    I_\phi(\mu):=\Lambda^*(\mu)=\sup_{u\in C_c^\infty(X)}\biggl\{ \mathcal{E}_\phi(P(\phi+u))-\int_X ud\mu \biggr\}.
\end{equation*}
Since the laws are supported on the closed cone $M_+$ and the rate function is infinite outside of it, the LDP restricts to an LDP on $M_+$. It remains to identify the minimizer. We have $\Lambda(0)=0$, and \eqref{Gderivative} implies
\begin{equation*}
    D\Lambda(0)[h]=\int_X h\operatorname{MA}(\phi).
\end{equation*}
Convexity of $\Lambda$ then implies $\Lambda(f)\geq \int_X f \operatorname{MA}(\phi)$, and consequently $I_\phi(\operatorname{MA}(\phi))=0$. Conversely, if $I_\phi(\mu)=0$, then
\begin{equation*}
    t\int_X h d\mu \leq \Lambda(th),\qquad  \forall h\in C_c^\infty(X),  \forall t\in \mathbb{R}.
\end{equation*}
This implies 
\begin{equation*}
    \int_X h d\mu = D\Lambda(0)[h]=\int_X h\operatorname{MA}(\phi),
\end{equation*}
for every compactly supported $h$. Hence $\mu=\operatorname{MA}(\phi)$. This proves the uniqueness of the minimizer and completes the proof of our main theorem.

\begin{theorem}\label{thmmain}
Let $X$ be a Stein manifold with a plurisubharmonic exhaustion function $\phi$ satisfying the Condition $\mathfrak{A}$. Let $\Theta_k$ be the determinantal point process associated with the weighted Bergman space of holomorphic $n$-forms with weight $e^{-k\phi}$, defined in Definition \ref{definitionDPP}. Set 
\begin{equation*}
    L_k=\frac{1}{k^n}\Theta_k.
\end{equation*}
The laws of $L_k$, on the space of positive Radon measures with the vague topology, satisfy a large deviation principle with speed $r_k=k^{n+1}$ and a good rate function
\begin{equation*}
    I_\phi(\mu)=\sup_{u\in C_c^\infty(X)}\biggl\{ \mathcal{E}_\phi(P(\phi+u))-\int_X ud\mu \biggr\}.
\end{equation*}
The unique zero of $I_\phi$ is
\begin{equation*}
    \mu_\phi=\operatorname{MA}(\phi)=\frac{(dd^c \phi)^n}{n!}.
\end{equation*}
\end{theorem}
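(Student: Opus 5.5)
The proof assembles the pieces established above through the abstract G\"artner--Ellis theorem (Theorem \ref{thmGE}), applied on the locally convex space $(M,\tau)$ with dual $C^\infty_c(X)$ and to the laws $\Gamma_k$ of $\iota(L_k)$.

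The first step is to identify the limit of the normalized log-moment generating functions. For $f\in C_c^\infty(X)$ we have
\begin{equation*}
\frac{1}{r_k}\Lambda_k(r_kf)=\frac{1}{k^{n+1}}\log\mathbb{E}\, e^{k\langle f,\Theta_k\rangle}.
\end{equation*}
By \eqref{logmomgenformula} and Theorem \ref{wow} applied with $u=-f$, this converges to $\Lambda(f)=-\mathcal{E}_\phi(P(\phi-f))$. Theorem \ref{diffofequil} shows that $\Lambda$ is finite, convex and Gateaux differentiable, with $D\Lambda(f)[h]=\int_X h\operatorname{MA}(P(\phi-f))$.

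The second step is exponential tightness, which is Proposition \ref{thmexptight}. Theorem \ref{thmGE} then yields an LDP on $(M,\tau)$ with speed $k^{n+1}$ and convex good rate function $\Lambda^*$.

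The third step transfers the LDP to $M_+$ with the vague topology.
\begin{itemize}
\item If $\mu\notin M_+$, pick $0\le\chi\in C_c^\infty(X)$ with $\langle\chi,\mu\rangle<0$. Positivity of $L_k$ gives $\Lambda(-t\chi)\le 0$, so $\Lambda^*(\mu)=+\infty$.
\item Since each $\Gamma_k$ is supported on the closed set $M_+$ and the rate function is infinite off it, the LDP restricts to $M_+$.
\item The subspace topology on $M_+$ coincides with the vague topology, and sublevel sets remain compact there, so the rate function stays good.
\item Substituting $u=-f$ rewrites $\Lambda^*$ as $I_\phi$.
\end{itemize}

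The final step identifies the zero. Since $P(\phi)=\phi$, we have $\Lambda(0)=0$, and $D\Lambda(0)[h]=\int_X h\operatorname{MA}(\phi)$. Convexity gives $\Lambda(f)\ge\int_X f\operatorname{MA}(\phi)$, hence $I_\phi(\operatorname{MA}(\phi))=0$. Conversely, suppose $I_\phi(\mu)=0$. Then $t\int_X h\,d\mu\le\Lambda(th)$ for all $t\in\mathbb{R}$, with equality at $t=0$. Dividing by $t$ and letting $t\to0^\pm$ forces $\int_X h\,d\mu=D\Lambda(0)[h]$, so $\mu=\operatorname{MA}(\phi)$.

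The substantive obstacles, namely the envelope regularity (Theorem \ref{plurimain}) and the limit in Theorem \ref{wow}, are already established. The only delicate remaining point is the restriction to $M_+$, which requires checking that goodness survives there and that the vague topology agrees with $\tau$ on $M_+$.
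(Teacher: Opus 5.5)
Your proposal is correct and follows the paper's own argument step for step. The steps are: the limit $\Lambda(f)=-\mathcal{E}_\phi(P(\phi-f))$ from Theorem~\ref{wow}; finiteness, convexity and Gateaux differentiability from Theorem~\ref{diffofequil}; exponential tightness from Proposition~\ref{thmexptight}; the abstract G\"artner--Ellis theorem on $(M,\tau)$; the observation that $\Lambda^*=+\infty$ off the closed cone $M_+$, so the LDP restricts to $M_+$; and identification of the unique zero via $\Lambda(0)=0$ and $D\Lambda(0)[h]=\int_X h\operatorname{MA}(\phi)$. Your extra remarks, that goodness survives the restriction because sublevel sets lie in $M_+$ and the explicit $t\to0^\pm$ limit, only make explicit what the paper leaves implicit.
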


\section{Outlook}

In this paper, we prove the LDP via an abstract G\"artner--Ellis theorem, with results from pluripotential theory on psh envelopes and Monge--Amp\`ere energy, together with $L^2$-estimates for the $\bar\partial$-equation, fitting together very naturally. This line of proof of the LDP for DPPs was developed by Berman (and Boucksom) through a series of papers \cite{berman2009poly,berman2009bergman,berman2010growth}, culminating in \cite{berman2014determinantal}. It is quite interesting to see this kind of seamless combination of diverse ideas from complex geometry and probability theory. We hope that such connections between different ideas will inspire further study. For example, we suggest the following direction.

While the present paper concerns infinite-dimensional Bergman spaces on noncompact manifolds, it is also natural to seek extensions of the finite-dimensional weighted polynomial setting on $\mathbb{C}^n$. One possible direction is provided by complete K\"ahler manifolds under suitable curvature and growth assumptions, where holomorphic functions of polynomial growth serve as analogues of polynomials. See, for example, \cite{liu2021dimension,liu2023brief}, where ideas such as Gromov--Hausdorff convergence enter the picture. It would be interesting to see how determinantal point processes can be studied in this setting, and what new connections might emerge. We leave this as a direction for future work.

\subsection*{Acknowledgement}
This work was supported by the National Research Foundation of Korea (NRF) grant funded by the Korea government (MSIT) RS-2024-00346651.
\subsection*{Use of generative AI}
The author used ChatGPT 5.6 sol and ChatGPT 6 Astra to assist with the mathematical content of this paper, from initial proof generation to error correction. All AI-assisted output was independently checked and revised by the author, who takes full responsibility for the final manuscript.

\bibliographystyle{alpha}
\bibliography{References}

\end{document}